\theoremstyle{plain}
\newtheorem{thm}{Theorem}[section]
\newtheorem{lem}[thm]{Lemma}
\newtheorem{prop}[thm]{Proposition}
\newtheorem{cor}[thm]{Corollary}
\newcommand{\thmref}[1]{Theorem~\ref{#1}}
\newcommand{\corref}[1]{Corollary~\ref{#1}}
\theoremstyle{definition}
\newtheorem{defn}[thm]{Definition}
\newtheorem*{rmk}{Remark}
\newtheorem*{con}{Conjecture}
\numberwithin{equation}{section}
\begin{document}
\baselineskip 6.1mm
\newcommand{\suchthat}{\;\ifnum\currentgrouptype=16 \middle\fi|\;}
\newcommand{\dmid}{\mathrel\Vert}
\title[Non-P\'olya Fields with Large P\'olya Groups]
{Non-P\'olya Fields with Large P\'olya Groups Arising from Lehmer Quintics}
\author{Nimish Kumar Mahapatra}
\author{Prem Prakash Pandey}
\address[Nimish Kumar Mahapatra]{Indian Institute of Science Education and Research, Berhampur, India.}
\email{nimishkm18@iiserbpr.ac.in}

\address[Prem Prakash Pandey]{Indian Institute of Science Education and Research, Berhampur, India.}
\email{premp@iiserbpr.ac.in}

\subjclass[2020]{11R09; 11R29; 11R34}

\date{\today}

\keywords{P\'olya group; P\'olya fields; Hilbert Class Field; Genus field.}

\begin{abstract}
In this article we construct a new family of quintic non-P\'olya fields with large P\'olya groups. We study the upper bound of P\'olya numbers of such fields and show that the P\'olya numbers never exceed five times the size of its P\'olya group. Finally we show that such non-P\'olya fields are non-monogenic fields of field index one. 
\end{abstract}

\maketitle{}

\section{Introduction}
Let $K$ be an algebraic number field and $\mathcal{O}_K$ be its ring of integers. Let $Int(\mathcal{O}_K)=\{f\in K[X] \mid f(\mathcal{O}_K)\subseteq\mathcal{O}_K\}$ be the ring of integer valued polynomials on $\mathcal{O}_K$. Then the number field $K$ is said to be a P\'olya field if the $\mathcal{O}_K$-module $Int(\mathcal{O}_K)$ has a regular basis, that is, a basis $(f_n)$ such that for each $n\in\mathbb{N}\cup\{0\}$, degree$(f_n)=n$ (see \cite{ZAN}). 
For each $n\in \mathbb{N}$, the leading coefficients of the polynomials in $Int(\mathcal{O}_K)$ of degree $n$ together with zero form a fractional ideal of $\mathcal{O}_K$, denoted by $\mathfrak{J}_n(K)$.
The following result establishes a connection between $\mathfrak{J}_n(K)$ and the P\'olya-ness of the number field $K$.
\begin{prop}\label{P1}
\cite{PJC} A number field $K$ is a P\'olya field if and only if $\mathfrak{J}_n(K)$ is principal for all integer $n\geq1$.
\end{prop}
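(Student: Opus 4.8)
The plan is to prove both implications directly from the definitions, exploiting the leading-coefficient map together with a descending induction on degree. The linchpin observation is that if $(f_n)_{n\geq 0}$ is any family in $Int(\mathcal{O}_K)$ with $\deg f_n = n$, then since the degrees are pairwise distinct, any $\mathcal{O}_K$-combination $g=\sum_{k=0}^{m} c_k f_k$ with $c_m\neq 0$ has degree exactly $m$ and leading coefficient $c_m\,\ell_m$, where $\ell_n$ denotes the leading coefficient of $f_n$. It is also worth recording at the outset that each $\mathfrak{J}_n(K)$ is a nonzero fractional ideal containing $\mathcal{O}_K$, since a product $\prod_{i}(X-a_i)$ over distinct $a_i\in\mathcal{O}_K$ is monic of degree $n$ and automatically integer-valued; in particular any generator $\ell_n$ of a principal $\mathfrak{J}_n(K)$ is nonzero.

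For the forward direction I would assume $K$ is P\'olya with regular basis $(f_n)$. Given $g\in Int(\mathcal{O}_K)$ of degree $n$, I write $g=\sum_{k=0}^{n} c_k f_k$ with $c_k\in\mathcal{O}_K$; because $\deg f_k<n$ for $k<n$, the leading coefficient of $g$ equals $c_n\ell_n$. Thus every element of $\mathfrak{J}_n(K)$ lies in $\ell_n\mathcal{O}_K$, while $\ell_n\in\mathfrak{J}_n(K)$ by construction, so $\mathfrak{J}_n(K)=\ell_n\mathcal{O}_K$ is principal for every $n\geq 1$.

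For the converse, assuming each $\mathfrak{J}_n(K)$ is principal, I would choose a generator $\ell_n$ and, since $\ell_n\in\mathfrak{J}_n(K)$ is realized as a leading coefficient, lift it to some $f_n\in Int(\mathcal{O}_K)$ of degree $n$ with leading coefficient $\ell_n$ (taking $f_0=1$). The claim is that $(f_n)_{n\geq 0}$ is a regular basis. Spanning is proved by strong induction on degree: for $g\in Int(\mathcal{O}_K)$ of degree $m$, its leading coefficient lies in $\mathfrak{J}_m(K)=\ell_m\mathcal{O}_K$, say equals $c\,\ell_m$ with $c\in\mathcal{O}_K$, whence $g-c f_m\in Int(\mathcal{O}_K)$ has degree strictly less than $m$ and the inductive hypothesis applies. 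Linear independence over $\mathcal{O}_K$ is immediate from the linchpin observation, since a nontrivial relation $\sum_k c_k f_k=0$ with top index $m$ would force the nonzero coefficient $c_m\ell_m$ of $X^m$ to vanish.

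I expect the only genuine content to be recognizing $\mathfrak{J}_n(K)$ as the precise obstruction, everything else being bookkeeping. The one point requiring care is the converse: one must check that the lifted $f_n$ both generate $Int(\mathcal{O}_K)$ and remain $\mathcal{O}_K$-independent, and that each reduction $g-c f_m$ stays inside $Int(\mathcal{O}_K)$ (which it does, being a difference of integer-valued polynomials). The hypothesis that \emph{every} $\mathfrak{J}_n(K)$ is principal is used essentially in the spanning step, so the main obstacle is simply ensuring the induction is set up cleanly rather than any deep estimate.
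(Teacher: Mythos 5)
Your argument is correct and is essentially the standard proof of this equivalence: the paper itself gives no proof, citing Cahen--Chabert, and the argument there is exactly your leading-coefficient observation combined with descending/strong induction on degree to pass between a regular basis and generators of the ideals $\mathfrak{J}_n(K)$. No gaps; the forward direction, the lifting of a generator $\ell_n$ to some $f_n\in Int(\mathcal{O}_K)$ of degree $n$, and the spanning induction are all set up correctly.
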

It follows immediately from Proposition \ref{P1} that if the class number of $K$ is one then $K$ is a P\'olya field. However, the converse is not valid in general. That is, if the class number $h_K$ of $K$ is not one then we can not decide whether $K$ is a P\'olya field or not: for instance, every cyclotomic field is a P\'olya field (see \cite{ZAN}).

Let $Cl(K)$ denote the ideal class group of $K$. For each integer $n\geq1$, let $[\mathfrak{J}_n(K)]$ be the ideal class in $Cl(K)$ corresponding to the fractional ideal $\mathfrak{J}_n(K)$. The P\'olya group $Po(K)$ of $K$ is defined to be the subgroup of $Cl(K)$ generated by the elements $[\mathfrak{J}_n(K)]$ in $Cl(K)$. Therefore, $K$ is a P\'olya field if and only if $Po(K)=\{1\}$.

%

It is an interesting problem to look for  explicit families of number fields that are P\'olya /non-P\'olya (for example see \cite{BAH1},\cite{BAH2},\cite{AMA},\cite{CHA}). In fact, the classification of P\'olya fields of lower degree number fields is of significant interest. Towards this, Zantema \cite{ZAN} completely characterized quadratic P\'olya fields.
\begin{prop}
\cite{ZAN} Let $p$ and $q$ be two distinct odd primes. A quadratic field $\mathbb{Q}(\sqrt{d})$ is a P\'olya field if and only if $d$ is of one of the following forms :
\begin{enumerate}
    \item $d=2$, or $d=-1$, or $d=-2$, or $d=-p$ where $p\equiv3\pmod4$, or $d=p$,
    \item $d=2p$, or $d=pq$ where $pq\equiv1\pmod4$ and, in both cases, the fundamental unit has norm $1$ if $p\equiv1\pmod4$.
\end{enumerate}
\end{prop}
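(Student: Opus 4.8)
The plan is to compute the P\'olya group $Po(K)$ explicitly and read off when it is trivial, since by the discussion after \propref{P1} the field $K$ is P\'olya precisely when $Po(K) = \{1\}$. I would use the classical description of $Po(K)$ as the subgroup of $Cl(K)$ generated by the classes $[\mathfrak{q}_q]$, where for each prime power $q$ one sets $\mathfrak{q}_q = \prod_{N(\mathfrak P) = q}\mathfrak P$, the product taken over the prime ideals of $\mathcal O_K$ of absolute norm $q$. Specializing to a quadratic field immediately trims the generators: a split prime $p$ gives $\mathfrak q_p = p\mathcal O_K$ and an inert prime gives $\mathfrak q_{p^2} = p\mathcal O_K$, both principal, whereas a ramified prime $p$, with $p\mathcal O_K = \mathfrak P_p^{\,2}$, gives $\mathfrak q_p = \mathfrak P_p$. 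As $\mathfrak P_p^{\,2}$ is principal, $[\mathfrak P_p]$ has order dividing $2$, so that $Po(K) = \langle\, [\mathfrak P_p] : p \mid D\,\rangle \subseteq Cl(K)[2]$, where $D$ is the discriminant and the $p \mid D$ are exactly the ramified primes; write $t$ for their number.

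Next I would determine the relations among the $t$ generators. A product $\prod_{p \in S}\mathfrak P_p$ is principal exactly when $N(\alpha) = \pm\prod_{p\in S}p$ is solvable in $\mathcal O_K$, so $Po(K) \cong \mathbb F_2^{\,t}/R$ with $R \subseteq \mathbb F_2^{\,t}$ the subspace recording the principal products. For $t \ge 2$ the element $\sqrt d$ furnishes one such relation, and genus theory together with the cohomology of $\mathcal O_K^*$ shows that $R$ acquires a second independent relation precisely in the real case when $-1$ is not a norm, that is when the fundamental unit satisfies $N(\epsilon) = +1$. This yields the key formula $|Po(K)| = 2^{t-1}$ when $d < 0$ or when $d > 0$ and $N(\epsilon) = -1$, and $|Po(K)| = 2^{t-2}$ when $d > 0$ and $N(\epsilon) = +1$. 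In particular $Po(K) = \{1\}$ forces $t = 1$ for imaginary fields and $t \le 2$ for real fields, reducing the problem to a finite inspection of fundamental discriminants with at most two prime divisors.

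It then remains to match the surviving discriminants against the stated list. The imaginary fields with $t = 1$ are exactly $d = -1,\,-2$ and $d = -p$ with $p \equiv 3 \pmod 4$; the real fields with $t = 1$ are $d = 2$ and $d = p$ with $p \equiv 1 \pmod 4$, and for all of these $|Po(K)| = 1$ automatically. For real fields with $t = 2$ I would invoke the formula: P\'olya-ness is equivalent to $N(\epsilon) = +1$, which I would make explicit using the standard fact that $N(\epsilon) = -1$ can hold only when every odd prime dividing $d$ is $\equiv 1 \pmod 4$ (so the presence of a factor $\equiv 3 \pmod 4$ forces $N(\epsilon) = +1$, hence P\'olya-ness). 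Tracking the shape of $D$ according to $d \bmod 4$ then separates the two-prime cases into $d = 2p$ and $d = pq$ with $pq \equiv 1 \pmod 4$, and produces exactly the condition that $\epsilon$ have norm $+1$ when $p \equiv 1 \pmod 4$; the lone case $d = p$ with $p \equiv 3 \pmod 4$ (where $t = 2$ but $N(\epsilon) = +1$, and indeed $\mathfrak P_p = (\sqrt p)$) lands in group (1).

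I expect the main obstacle to be the relation count in the real case---proving that the extra relation in $R$ appears exactly when $N(\epsilon) = +1$, and then running the quadratic-reciprocity bookkeeping that converts \emph{``some ramified prime is principal''} into the congruence conditions of the statement, all while handling the anomalous ramification of $2$ according to $d \bmod 4$. By comparison, the reduction to ramified primes, the imaginary case, and every $t = 1$ field are routine once the description of $Po(K)$ through the ideals $\mathfrak q_q$ is in hand. A final verification that all remaining fundamental discriminants, in particular every one with $t \ge 3$, satisfy $Po(K) \ne \{1\}$ then completes the classification.
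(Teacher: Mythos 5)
Your proposal is sound, but there is nothing in the paper to compare it against: the paper does not prove this proposition, it quotes it from Zantema \cite{ZAN} as background. Judged on its own, your plan is correct and is essentially the degree-$2$ transpose of what the paper does for its quintic theorem. In particular, the formula you single out as the main obstacle --- $|Po(K)|=2^{t-1}$ when $d<0$ or when $d>0$ with $N(\epsilon)=-1$, and $|Po(K)|=2^{t-2}$ when $d>0$ with $N(\epsilon)=+1$ --- does not need to be re-derived from genus theory and the cohomology of $\mathcal{O}_K^{\times}$: it is exactly Proposition~\ref{CHABERT} (Chabert's formula for cyclic fields) with $n=2$ and $\prod_p e_p=2^t$, the same cited input that the paper specializes (via Corollary~\ref{cor2}) to obtain $|Po(K_n)|=5^{\omega(m_n)-1}$ in Theorem~\ref{NEW1}; invoking it would make your argument complete without the step you flagged as hardest. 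The remaining pieces of your plan check out: the reduction of the generators $[\mathfrak{q}_q]$ to the classes of ramified primes (split and inert primes contribute principal ideals), the relation $(\sqrt{d})=\prod_{p\mid d}\mathfrak{P}_p$, the classical fact that $N(\epsilon)=-1$ forces every odd prime divisor of $d$ to be $\equiv 1\pmod 4$ (so a prime factor $\equiv 3\pmod 4$ yields P\'olya-ness for free when $t=2$), and the final matching: imaginary fields require $t=1$ (giving $d=-1,-2,-p$ with $p\equiv 3\pmod 4$), real fields require $t\le 2$ (giving $d=2$, $d=p$, and the conditional cases $d=2p$ and $d=pq\equiv 1\pmod 4$), while $t\ge 3$ always leaves $|Po(K)|\ge 2$. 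This route is also faithful to Zantema's original argument, which likewise turns on the norm of the fundamental unit.
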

 Leriche \cite{AL2} completely classified Galois cubic P\'olya fields. We quote the precise result below.
 \begin{prop}
 \cite{AL2} Let $K$ be a cyclic cubic field. Then, $K$ is a P\'olya field if and only if $K=\mathbb{Q}(\theta)$ where $\theta$ is a root of a polynomial $P(X)$ such that:
 \begin{equation*}
     \text{either } P(X)=X^3-3X+1 \text{ or } P(X)=X^3-3pX-pu
 \end{equation*}
where $p$ is a prime such that $p=\frac{u^2+27w^2}{4}$ with $u\equiv2\pmod3$ and $w\in \mathbb{N}^*$.
 \end{prop}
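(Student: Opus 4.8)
The plan is to translate the condition $Po(K)=\{1\}$ into a condition on ramification, and then to read off the two polynomial families from the classification of cyclic cubic fields with a single ramified prime. First I would recall that for a Galois extension $K/\mathbb{Q}$ the fractional ideals $\mathfrak{J}_n(K)$ are generated by the \emph{Ostrowski ideals} $\Pi_{q}=\prod_{N(\mathfrak{p})=q}\mathfrak{p}$ attached to prime powers $q=p^{f}$, so that $Po(K)$ is generated by the classes $[\Pi_q]$. When $p$ is unramified the product of all primes above it is $\Pi_{p^{f}}=p\mathcal{O}_K$, which is principal, so only ramified primes contribute. Since $\mathrm{Gal}(K/\mathbb{Q})\cong\mathbb{Z}/3\mathbb{Z}$ has no proper nontrivial subgroup, every ramified prime $p$ is totally ramified: $p\mathcal{O}_K=\mathfrak{p}_p^{3}$ with $\mathfrak{p}_p$ the unique (hence $\mathrm{Gal}$-invariant) prime above $p$, of norm $p$. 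Consequently, if $p_1,\dots,p_t$ are the ramified primes, then
\[
Po(K)=\langle[\mathfrak{p}_{p_1}],\dots,[\mathfrak{p}_{p_t}]\rangle,\qquad [\mathfrak{p}_{p_i}]^{3}=[(p_i)]=1 .
\]

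Next I would compute $|Po(K)|$. Each generator lies in the ambiguous subgroup $Cl(K)^{G}$ and has order dividing $3$, so $Po(K)$ is an elementary abelian $3$-group and the task is to count the relations. I would invoke Chevalley's ambiguous class number formula for the cyclic extension $K/\mathbb{Q}$,
\[
|Cl(K)^{G}|=\frac{h_{\mathbb{Q}}\,\prod_{v}e_{v}}{[K:\mathbb{Q}]\,[\mathcal{O}_{\mathbb{Q}}^{*}:\mathcal{O}_{\mathbb{Q}}^{*}\cap N_{K/\mathbb{Q}}K^{*}]} .
\]
Here $h_{\mathbb{Q}}=1$; a cyclic cubic field is totally real, so the infinite place is unramified and $\prod_{v}e_{v}=3^{t}$; and since $[K:\mathbb{Q}]=3$ is odd, $-1=N_{K/\mathbb{Q}}(-1)$ is a norm, whence the unit index is $1$. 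This gives $|Cl(K)^{G}|=3^{t-1}$. Identifying $Po(K)$ with the genus group, equivalently with $\mathrm{Gal}(K_{\mathrm{gen}}/K)$ where $K_{\mathrm{gen}}$ is the compositum of the $t$ cyclic cubic fields of prime-power conductor dividing the conductor of $K$, yields $|Po(K)|=3^{t-1}$. Hence $K$ is a P\'olya field if and only if exactly one prime ramifies, i.e.\ $t=1$.

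It then remains to list the cyclic cubic fields with a single ramified prime and exhibit their defining polynomials. By conductor–discriminant, the conductor of a cyclic cubic field is $9\prod q_i$ or $\prod q_i$ with the $q_i\equiv1\pmod 3$ distinct primes, so a single ramified prime forces the conductor to be either $9$ or a prime $p\equiv1\pmod3$. The conductor-$9$ field is the maximal real subfield $\mathbb{Q}(\zeta_9)^{+}$, generated by a root of $P(X)=X^{3}-3X+1$ (discriminant $81=3^{4}$, a square). For conductor $p\equiv1\pmod3$ there is a unique cyclic cubic field, and using the Gaussian-period description of its generator together with the classical representation $4p=u^{2}+27w^{2}$, with the normalization $u\equiv2\pmod3$ fixing the sign of $u$, one obtains the minimal polynomial $P(X)=X^{3}-3pX-pu$; one checks $\mathrm{disc}(P)=27p^{2}(4p-u^{2})=27p^{2}\cdot 27w^{2}=(27pw)^{2}$, confirming that this polynomial cuts out the cyclic cubic field of conductor $p$.

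The main obstacle is the middle step: pinning down $|Po(K)|$ exactly rather than merely bounding it. Two subtleties must be controlled — first, that the group generated by the ramified primes (the strongly ambiguous classes) is all of $Cl(K)^{G}$, which is where the unit computation ($-1$ being a norm) enters; and second, the passage between $Cl(K)^{G}$ and the genus group $\mathrm{Gal}(K_{\mathrm{gen}}/K)$. A secondary technical point is the explicit conductor-$p$ computation: verifying that the period polynomial reduces to exactly $X^{3}-3pX-pu$, and that $u\equiv2\pmod3$ is the correct normalization, requires careful bookkeeping of Gaussian periods and cubic Gauss sums.
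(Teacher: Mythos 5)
The paper quotes this proposition from Leriche \cite{AL2} without giving a proof, so there is no internal argument to compare against; your proof is correct and follows what is essentially Leriche's own route. Your key count $|Po(K)|=3^{t-1}$ (with $t$ the number of ramified primes, all necessarily totally ramified) is exactly the odd-degree specialization of Chabert's formula that the paper records as Proposition~\ref{CHABERT} and Corollary~\ref{cor2} --- including the point you rightly isolate, that $-1$ being a norm in odd degree makes the strongly ambiguous classes exhaust $Cl(K)^{G}$ --- and the remaining step is the classical enumeration of cyclic cubic fields with a single ramified prime (conductor $9$, giving $X^{3}-3X+1$, or conductor a prime $p\equiv1\pmod 3$, giving the normalized period polynomial $X^{3}-3pX-pu$ with $\mathrm{disc}=(27pw)^{2}$), which you verify correctly.
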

 In the same article Leriche also classified cyclic quartic and cyclic sextic P\'olya fields. Moreover, he obtained similar classifications for some more families of bi-quadratic and sextic fields. (See \cite[Theorem 5.1, Theorem 6.2]{AL2} ). Recently, there have been several attempts to provide families of P\'olya/non-P\'olya fields in the remaining cases of the bi-quadratic extensions (see \cite{BAH1},\cite{BAH2},\cite{AMA},\cite{CHA}). In \cite{JAI} the authors constructed a new family of totally real bi-quadratic field with large P\'olya groups. 
 
In this article we characterize the P\'olya-ness of a special family of quintic fields arising from Lehmer qunitics. For each integer $n \in \mathbb{Z}$, the Lehmer quintic $f_n(x)\in \mathbb{Z}[x]$ is defined as 
\begin{align*}
    f_n(x) = & x^5+n^2x^4-(2n^3+6n^2+10n+10)x^3+(n^4+5n^3+11n^2+15n+5)x^2\\&+(n^3+4n^2+10n+10)x+1.
\end{align*}
Let $\theta_n\in\mathbb{C}$ be a root of $f_n(x)=0$. If we set $K_n=\mathbb{Q}(\theta_n)$ then $[K_n : \mathbb{Q}]=5$ and the fields $K_n$ are called Lehmer quintic fields \cite{LEH}. The main theorem we prove in this article is the following one:
\begin{thm}\label{NEW1}
Let $\{K_n\}$ be the family of Lehmer quintic fields. If $m_n=n^4+5n^3+15n^2+25n+25$ is cube free, then
\begin{enumerate}
    \item $K_n$ is a P\'olya field if and only if $m_n$ is a prime or $m_n=25$.
    \item We have $Po(K_n)\simeq(\mathbb{Z}/5\mathbb{Z})^{\omega({m_n})-1}$. Here $\omega(t)$ denotes the number of distinct prime divisors of $t$.
\end{enumerate}
 Moreover, there are infinitely many non-P\'olya fields in the family $K_n$. 
\end{thm}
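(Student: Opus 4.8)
The plan is to understand the structure of the Pólya group of the Lehmer quintic fields $K_n$ via the genus theory of the Galois closure, and then read off both the isomorphism type and the Pólya/non-Pólya dichotomy from the prime factorization of the discriminant-related quantity $m_n$.

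The plan is to deduce the infinitude of non-Pólya fields directly from parts (1) and (2) of the theorem, and then to exhibit an explicit infinite subfamily. By part (2) we have $Po(K_n) \cong (\mathbb{Z}/5\mathbb{Z})^{\omega(m_n)-1}$, so $K_n$ is non-Pólya precisely when $\omega(m_n) \ge 2$; equivalently, by part (1), when $m_n$ is neither prime nor equal to $25$. Thus it suffices to produce infinitely many $n$ for which $m_n$ is cube free and has at least two distinct prime divisors.

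First I would isolate the prime $5$. A direct computation modulo $5$ gives $m_n \equiv n^4 \pmod 5$, so $5 \mid m_n$ if and only if $5 \mid n$; and writing $n = 5k$ one finds $m_{5k} = 25\, g(k)$ with $g(k) = 25k^4+25k^3+15k^2+5k+1 \equiv 1 \pmod 5$. Hence for every $k \ge 1$ the integer $m_{5k}$ is divisible by $25$ but not by $125$, while $g(k) > 1$ is coprime to $5$ and so contributes at least one prime divisor different from $5$; in particular $\omega(m_{5k}) \ge 2$. Moreover, since $5^2 \parallel m_{5k}$ and $\gcd(g(k),5)=1$, the integer $m_{5k}$ is cube free if and only if $g(k)$ is cube free. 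So the whole problem reduces to showing that $g(k)$ is cube free for infinitely many $k$.

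To handle this I would invoke the theory of power free values of polynomials. The substitution $n \mapsto 5k$ preserves irreducibility, so $g$ is an irreducible quartic, and $g(0)=1$ shows that $g$ has no fixed prime, hence no fixed cube, divisor. For an irreducible polynomial of degree $d$ the $k$ free values are known to have positive density whenever $d \le \tfrac{3}{2}k$ (the boundary case $d/k = 3/2$ being Hooley's theorem on square free values of irreducible cubics); here $d = 4$ and $k = 3$ give $d/k = 4/3$, which lies strictly below the threshold and is thus within the unconditional reach of Hooley's method. Consequently $g$ takes cube free values for a positive proportion of $k$, and in particular infinitely often. Combining this with the previous paragraph yields infinitely many $n$ (namely $n = 5k$ for suitable $k$) with $m_n$ cube free and $\omega(m_n) \ge 2$, each giving a non-Pólya field $K_n$.

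The main obstacle is precisely this final input on cube free values. The elementary sieve controls the contribution of small primes and leaves the density $\prod_p (1 - \rho(p^3)/p^3) > 0$, but a genuinely nontrivial contribution survives from primes $p$ up to $X^{4/3}$ with $p^3 \mid g(k)$, and ruling this out requires the Hooley-type estimate rather than a bare counting argument. Everything else is a short congruence computation together with the already-established isomorphism of part (2).
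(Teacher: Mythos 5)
Your treatment of the ``moreover'' clause is essentially the paper's: you isolate the prime $5$, write $m_{5k}=25\,g(k)$ with $g(k)=25k^4+25k^3+15k^2+5k+1$ coprime to $5$, note that $\omega(m_{5k})\geq 2$ forces these fields to be non-P\'olya, and reduce everything to cube-free values of the quartic $g$. The paper obtains that last step from Erd\H{o}s's theorem on $(l-1)$-free values of a degree-$l$ polynomial not divisible by the cube of a linear form, whereas you invoke the Hooley-type positive-density result for $(d-1)$-free values of irreducible polynomials; both are legitimate for $d=4$, $k=3$ (your general threshold ``$d\leq\tfrac32 k$'' is stated more broadly than what is known, but the case you need is the classical one), and the paper itself uses the density version when proving \corref{cor1}.

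The gap is that parts (1) and (2) are never proved: the opening sentence about ``genus theory of the Galois closure'' is a plan, not an argument, and your deduction of the infinitude explicitly presupposes (1) and (2). Three inputs are missing. First, the size $|Po(K_n)|=5^{\omega(m_n)-1}$, which the paper gets by combining Chabert's formula $|Po(K)|=\frac 1n\prod_p e_p$ for odd cyclic extensions with Jeannin's conductor formula, which identifies the totally ramified primes as exactly the prime divisors of $m_n$ (cube-freeness enters here, ensuring no $v_p(m_n)$ is divisible by $5$). Second, the elementary abelian structure in (2), which follows from Leriche's theorem that $Po(K_n)$ is generated by ambiguous ideal classes together with the norm computation $[\mathfrak{I}]^5=[N(\mathfrak{I})]=[1]$. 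Third, and most seriously, the ``only if'' direction of (1): when $\omega(m_n)=1$ and $m_n$ is cube-free, $m_n$ is a prime or the square of a prime, and one must rule out $m_n=p^2$ for $p\neq 5$. This is a genuine Diophantine step --- the paper bounds the integral points on $Y^2=X^4+5X^3+15X^2+25X+25$ via Masser's estimate $|x|\leq 26\,H(f)^3$ and checks the resulting finite range by computer --- and nothing in your proposal supplies it; without it the dichotomy ``$m_n$ prime or $m_n=25$'' cannot be read off from $\omega(m_n)=1$ alone.
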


Let $G$ be a finite group, and if $m>1$ is an integer then the $m$-rank of $G$ is defined to be the maximal integer $r$ such that $(\mathbb{Z}/m\mathbb{Z})^r$ is a subgroup of $G$. The following folklore conjecture is widely believed to be true but it is still open.
\begin{con}
Let $d>1$ and $m>1$ be two integers. Then the $m$-rank of the class group of $K$ is unbounded when $K$ runs through the number fields of degree $[K:\mathbb{Q}]=d$.
\end{con}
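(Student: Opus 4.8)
Since the final statement is a folklore conjecture that the excerpt itself flags as open, I cannot offer a complete proof; instead I outline the program by which one attacks it case by case, the partial progress the present paper supplies, and the structural reason the general statement resists resolution. The plan is to reduce to prime powers and then to manufacture explicit families. For a finite abelian group $G$, writing $m=\prod_\ell \ell^{a_\ell}$ and using the Sylow decomposition together with the Chinese Remainder Theorem, one checks that $(\mathbb{Z}/m\mathbb{Z})^r$ embeds in $G$ if and only if $(\mathbb{Z}/\ell^{a_\ell}\mathbb{Z})^r$ embeds in the $\ell$-part $G_\ell$ for every $\ell\mid m$, where $(\mathbb{Z}/\ell^{a}\mathbb{Z})^r$ embeds in $G_\ell$ exactly when $G_\ell$ has at least $r$ primary cyclic factors of order $\geq\ell^{a}$. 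Thus the $m$-rank of $G$ equals $\min_{\ell^{a}\,\|\,m}\bigl(\ell^{a}\text{-rank of }G\bigr)$, and to realize large $m$-rank in a single field $K$ one must make all of these prime-power ranks grow \emph{simultaneously} inside $Cl(K)$. The only known mechanism for forcing such growth is to exhibit, for each target $r$, an infinite parametric family of degree-$d$ fields in which a computable subgroup of the class group has controllably large $\ell$-rank.

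The first step is the classical base case $d=2$, $m=\ell=2$: Gauss's genus theory gives $2\text{-rank}\,Cl(\mathbb{Q}(\sqrt D))=t-1$ with $t$ the number of ramified primes, so discriminants with many prime factors settle it. The natural organizing tool for the next layer is Chevalley's ambiguous-class-number formula for a cyclic extension $L/F$ of prime degree $\ell$, which bounds the number of ambiguous classes below by a ramification count; a cyclic degree-$\ell$ field ramified at many primes therefore has large $\ell$-rank, realizing the conjecture for abelian $(d,m)=(\ell,\ell)$. The present paper furnishes exactly such an instance \emph{outside} the cyclic world: because $Po(K_n)$ is a subgroup of $Cl(K_n)$ and $Po(K_n)\simeq(\mathbb{Z}/5\mathbb{Z})^{\omega(m_n)-1}$ by Theorem~\ref{NEW1}, every Lehmer quintic with $m_n$ cube-free contains $(\mathbb{Z}/5\mathbb{Z})^{\omega(m_n)-1}$ inside its class group. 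Granting that $\omega(m_n)$ is unbounded on cube-free values of the polynomial $m_n=n^4+5n^3+15n^2+25n+25$ — which a sieve argument should supply — this settles the case $d=5$, $m=5$.

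Abstracting from these examples, the key steps for a general assault are: (i) fix a prime power $\ell^{a}$ and a degree $d$; (ii) write down an explicit parametric family of degree-$d$ fields, either by a defining polynomial as for the Lehmer quintics or by a class-field-theoretic/ramification prescription, for which a distinguished subgroup (the genus group, the group of ambiguous classes, or the Pólya group) has $\ell$-rank equal to a ramification quantity of the shape $\omega(\cdot)-c$; (iii) prove that this subgroup genuinely injects into $Cl(K)$ and produces honest $\ell^{a}$-torsion rather than collapsing under capitulation or norm relations; and (iv) invoke an analytic input — infinitude of integers of prescribed factorization type, or a Chebotarev/Dirichlet density statement — to drive the ramification count, hence the rank, to infinity.

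The hard part, and the reason the conjecture is still open in full, lives in steps (iii) and in uniformity. Genus- and Pólya-type methods deliver lower bounds only for an $\ell$-torsion quotient or a single distinguished subgroup; pushing to the $\ell^{a}$-rank for $a\geq2$ requires control of higher $\ell$-power torsion, which ramification alone does not see and which typically demands Galois-module information about units or Iwasawa-theoretic input not available uniformly. Equally obstructive is that for a general degree $d$ there is no known uniform recipe — no analogue of the Lehmer-quintic parametrization — producing degree-$d$ fields of arbitrarily large $\ell$-rank for every prime $\ell$, and for composite $m$ one must additionally force the several prime-power ranks to grow \emph{in the same} field, a simultaneity that no current construction guarantees. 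The realistic scope of this program is therefore to verify the conjecture for an expanding list of pairs $(d,m)$, with the present work contributing $(5,5)$, rather than to dispose of it uniformly.
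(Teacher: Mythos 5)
You are right that this statement is a folklore conjecture which the paper itself flags as open, so there is no complete proof in the paper to reproduce; declining to ``prove'' it and instead isolating the special case the paper contributes is the correct move. Your sketch of the case $(d,m)=(5,5)$ is in substance the paper's own argument for Corollary~\ref{cor1}: since $Po(K_n)\simeq(\mathbb{Z}/5\mathbb{Z})^{\omega(m_n)-1}$ sits inside $Cl(K_n)$ by Theorem~\ref{NEW1}, it suffices to make $\omega(m_n)$ unbounded along cube-free values of $m_n$, and the sieve input you defer with ``which a sieve argument should supply'' is exactly what the paper provides: writing $m_{5k}=25\,g(k)$ with $g(k)=25k^4+25k^3+15k^2+5k+1$, Theorem~\ref{Hel} (Helfgott--Reuss on power-free values at prime arguments) gives $g(p)$ cube-free for a positive proportion of primes $p$, and Theorem~\ref{ham} (Halberstam) then forces $\omega(m_{5p})\to\infty$ along those primes. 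So on the partial result your outline and the paper's proof coincide, with your version leaving the analytic step as a black box that the paper opens.

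One factual error needs correcting: the Lehmer quintic fields $K_n$ are \emph{cyclic} of degree $5$ (Schoof--Washington, as the paper records in Section~2), so your claim that the paper furnishes an instance ``outside the cyclic world'' is wrong. The paper says explicitly that the case $m\mid d$ of the conjecture already follows from class field theory, and it advertises its construction as an \emph{alternative, elementary} proof of the cyclic case $m=d=5$ --- via explicit P\'olya groups rather than via Chevalley-type ambiguous class counts --- not as a result beyond the abelian setting. This mislabeling does not damage your logic, since the containment $Po(K_n)\subseteq Cl(K_n)$ and the rank count rest only on Theorem~\ref{NEW1}, but it misstates what the construction buys: novelty of method, not novelty of the pair $(d,m)$.
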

It is known that when $m=d$, or more generally when $m\mid d$, this conjecture follows from the class field theory (see \cite{JEA}). In the following corollary to \thmref{NEW1} we give an alternative and elementary proof of the conjecture for the case $m=d=5$.
\begin{cor}\label{cor1}
The 5-rank of the class groups of the non-P\'olya Lehmer quintic fields $K_n$ are unbounded.
\end{cor}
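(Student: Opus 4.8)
The plan is to deduce the corollary from \thmref{NEW1}(2) together with the observation that, by its very definition, $Po(K_n)$ is a subgroup of $Cl(K_n)$. For any finite abelian group, passing to a subgroup can only decrease the $5$-rank, so whenever $m_n$ is cube free we obtain
\[
5\text{-rank}\big(Cl(K_n)\big)\ \ge\ 5\text{-rank}\big(Po(K_n)\big)\ =\ \omega(m_n)-1,
\]
the last equality being \thmref{NEW1}(2). Hence the corollary reduces to the elementary assertion that $\omega(m_n)$ is unbounded as $n$ ranges over the integers for which $m_n$ is cube free; concretely, it suffices to exhibit, for every $r\ge 1$, a single integer $n$ with $m_n$ cube free and $\omega(m_n)\ge r+1$.

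To force many prime divisors I would work with the quartic $g(t)=t^4+5t^3+15t^2+25t+25$, so that $m_n=g(n)$. First I would check that $g$ is separable (equivalently $\operatorname{disc}(g)\neq 0$), so that for every prime $p\nmid\operatorname{disc}(g)$ the reduction $g\bmod p$ has only simple roots. Since $g$ is nonconstant, the set of primes dividing some value $g(a)$ is infinite; discarding the finitely many divisors of $\operatorname{disc}(g)$, I can select distinct primes $p_1,\dots,p_{r+1}$, each admitting a simple root $a_i$ of $g$ modulo $p_i$. By the Chinese Remainder Theorem the congruences $n\equiv a_i\pmod{p_i}$ cut out a residue class modulo $P=p_1\cdots p_{r+1}$, and every $n$ in this class satisfies $p_i\mid m_n$ for all $i$, whence $\omega(m_n)\ge r+1$.

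It then remains to find one $n$ in this arithmetic progression for which $m_n$ is actually cube free. Because each $a_i$ is a simple root, $v_{p_i}(m_n)=1$ holds for generic $n$ in the class, so the progression carries no local cube obstruction and the cube-freeness of $m_n$ is governed only by primes outside $\{p_1,\dots,p_{r+1}\}$. I would settle this last point by invoking the theory of power-free values of polynomials: a separable quartic with no fixed cube divisor takes cube-free values in any admissible arithmetic progression (the $k=3$ case of the work of Erd\H{o}s, Hooley and Nair), which supplies the required $n$. This final step is the genuine obstacle: forcing $\omega(m_n)$ large by congruences is elementary, but guaranteeing cube-freeness of a degree-four polynomial value \emph{simultaneously} is the delicate analytic input, and it is the one place where a cited power-free-values theorem, rather than a bare congruence argument, appears unavoidable. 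I would also note an alternative that sidesteps cube-freeness altogether: since the $K_n$ are cyclic quintic fields, genus theory yields $5\text{-rank}\big(Cl(K_n)\big)\ge t-1$, where $t$ is the number of ramified primes, and for these fields $t$ grows with $\omega(m_n)$; the congruence construction alone then already forces unbounded $5$-rank, with no cube-free hypothesis needed.
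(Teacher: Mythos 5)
Your reduction --- $5$-rank$\bigl(Cl(K_n)\bigr)\ge 5$-rank$\bigl(Po(K_n)\bigr)=\omega(m_n)-1$, so it suffices to produce cube-free $m_n$ with $\omega(m_n)$ arbitrarily large --- is exactly the paper's, but the way you produce such $n$ is genuinely different. The paper restricts to the subfamily $n=5k$, writes $m_{5k}=25g(k)$, and intersects two density statements: Theorem \ref{Hel} (a positive proportion of primes $p$ have $g(p)$ cube-free) and Theorem \ref{ham} (Halberstam: $\omega(g(p))\sim\log\log p$ for almost all primes $p$). You instead force $\omega(m_n)\ge r+1$ by an elementary Chinese-Remainder construction and then need only one analytic input, namely cube-free values in a fixed residue class. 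Two remarks on that last step. First, you do not actually need a progression version of the power-free-values literature: substituting $n=Pt+c$ (with $c$ chosen modulo $p_i^2$ so that $v_{p_i}(g(c))=1$, which the simplicity of the roots permits via Hensel) and dividing out the content of $g(Pt+c)$ --- which this normalization forces to be squarefree and supported on the $p_i$ --- reduces the claim to the paper's own Theorem \ref{ero} applied to a primitive irreducible quartic; this should be written out, since Erd\H{o}s's theorem as quoted requires content one and your citation of a progression-adapted result is the only soft spot in the argument. Second, your genus-theory alternative is correct and arguably cleaner, as it removes the analytic input altogether, but be careful with ``$t$ grows with $\omega(m_n)$'': by Lemma \ref{lem2} a prime $p$ ramifies in $K_n$ only when $v_p(m_n)\not\equiv 0\pmod 5$, so it is your arrangement $v_{p_i}(m_n)=1$, not mere divisibility $p_i\mid m_n$, that guarantees the $p_i$ ramify; and one still needs $|Po(K_n)|=5^{t-1}>1$ (Corollary \ref{cor2}) to certify that these fields are non-P\'olya, as the statement requires. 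The trade-off: the paper's route yields a positive proportion of qualifying primes but leans on the deep Theorem \ref{Hel}, whereas yours is essentially elementary modulo Erd\H{o}s's 1953 theorem --- or modulo nothing at all in the genus-theory variant.
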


Leriche \cite{AL3} studied the embedding of a number field inside a P\'olya field and introduced the notion of P\'olya numbers. For a number field $K$ the minimum of degrees $[L:K]$ of extensions $L/K$ such that $L$ is a P\'olya field is called the P\'olya number of $K$ and it is denoted by $po_K$. We study the P\'olya numbers $po_{K_n}$ of non-P\'olya fields $K_n$ and obtained an upper bound of $po_{K_n}$ in terms of the size of its corresponding P\'olya groups $Po(K_n)$. Primarily we prove the following theorem.
\begin{thm}\label{NEW2}
Let $K_n$ be the family of Lehmer quintics fields such that $m_n$ is cube-free and $5\mid n$. Then $po_{K_n}\leq 5|Po(K_n)|$.
\end{thm}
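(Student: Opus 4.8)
The plan is to exhibit an explicit abelian Pólya field $L$ containing $K_n$ with $[L:K_n]\le 5|Po(K_n)|$, so that the definition of the Pólya number gives $po_{K_n}\le[L:K_n]\le 5|Po(K_n)|$. Throughout I use the classical reformulation (Pólya--Ostrowski, \cite{ZAN}) that, writing $\Pi_q(K)=\prod_{N\mathfrak p=q}\mathfrak p$ for the product of the primes of $\mathcal O_K$ of norm a fixed prime power $q$, the Pólya group $Po(K)$ is generated by the classes $[\Pi_q(K)]$. For an unramified prime $p$ one has $\Pi_{p^f}(K)=p\mathcal O_K$, which is principal, while for ramified $p$ one has $\Pi_{p^f}(K)^{e_p}=(p)$; hence only ramified primes contribute to $Po(K)$. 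First I identify the ramified primes of $K_n$. Since $5\mid n$, writing $n=5k$ gives $m_n=25(25k^4+25k^3+15k^2+5k+1)$, so cube-freeness forces $5^2\dmid m_n$ and $5$ ramifies. Comparing $Po(K_n)\simeq(\mathbb Z/5\mathbb Z)^{\omega(m_n)-1}$ from \thmref{NEW1} with the genus-theoretic count for cyclic quintic fields shows that the ramified primes of $K_n$ are exactly the $\omega(m_n)$ prime divisors of $m_n$; each is totally ramified ($e=5$), and each one other than $5$ is $\equiv1\pmod5$ by tameness.

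For each prime divisor $p$ of $m_n$ I attach a cyclic quintic field $N_p$ in which $p$ is the only ramified prime and is totally ramified: for $p\equiv1\pmod5$ take the quintic subfield of $\mathbb Q(\zeta_p)$, and for $p=5$ the quintic subfield of $\mathbb Q(\zeta_{25})$, which is totally real since complex conjugation lies in the order-$4$ subgroup whose fixed field it is. Each $N_p$ is totally real with a single finite ramified prime of index $5$, so the ambiguous class number formula gives $|Cl(N_p)^{\mathrm{Gal}(N_p/\mathbb Q)}|=5/(5\cdot 1)=1$ (the rational unit $-1$ being a norm from the odd-degree field). As the Pólya group always sits inside the ambiguous class group, $N_p$ is a Pólya field; in particular the unique prime $\mathfrak q_p$ of $N_p$ above $p$, which satisfies $\mathfrak q_p^{\,5}=(p)$, is principal, say $\mathfrak q_p=(\alpha_p)$. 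Now set $F=\prod_{p\mid m_n}N_p$ and $L=K_nF$. Then $L/\mathbb Q$ is elementary abelian of exponent $5$, its ramified primes lie among the divisors of $m_n$, and $[L:K_n]\le[F:\mathbb Q]\le 5^{\omega(m_n)}=5\,|Po(K_n)|$.

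It remains to prove that $L$ is a Pólya field, i.e. that $[\Pi_{p^f}(L)]=1$ for each ramified $p\mid m_n$. The crucial point is that the ramification index of $p$ in $L$ is again exactly $5$. For $p\ne5$ this is tame, so the inertia group is cyclic; being a subgroup of the elementary abelian group $\mathrm{Gal}(L/\mathbb Q)$ it has order $5$. For $p=5$ the extension is wild, but $\mathbb Q_5^{\times}/(\mathbb Q_5^{\times})^5\simeq(\mathbb Z/5\mathbb Z)^2$ has only a one-dimensional ramified part, so every exponent-$5$ abelian extension of $\mathbb Q_5$ has inertia of order at most $5$; hence $e_5(L)=5$ as well. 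Thus $e_p(L)=e_p(N_p)=5$, so $L/N_p$ is unramified above $\mathfrak q_p$. Since $\mathfrak q_p$ has residue degree $1$ over $p$, the primes of $L$ above $p$ are precisely those above $\mathfrak q_p$, all of a common residue degree $f$, whence $\Pi_{p^f}(L)=\mathfrak q_p\mathcal O_L=(\alpha_p)\mathcal O_L$ is principal. Therefore $Po(L)=\{1\}$, $L$ is Pólya, and the bound $po_{K_n}\le 5|Po(K_n)|$ follows.

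The main obstacle is the control of the wild prime $5$ when proving that $L$ is Pólya: one must rule out that forming the compositum of several quintic fields accumulates inertia of order $25$ at $5$, which would destroy the identification $\Pi_{5^f}(L)=\mathfrak q_5\mathcal O_L$ and leave the class possibly nontrivial. The local computation $\mathbb Q_5^{\times}/(\mathbb Q_5^{\times})^5\simeq(\mathbb Z/5\mathbb Z)^2$ with a single ramified quotient is exactly what prevents this, the tame primes $p\equiv1\pmod5$ being handled by the elementary cyclicity of tame inertia. A secondary technical point, settled above by the ambiguous class number formula, is that a totally real cyclic quintic field with a single ramified prime is a Pólya field, so that each $\mathfrak q_p$ is principal and its triviality can be transported up to $L$.
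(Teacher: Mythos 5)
Your proposal is correct, but it takes a genuinely different route from the paper. The paper's proof is a two-line deduction from cited results: by Ishida's formula (Theorem~\ref{polno}) the genus number of $K_n$ is $g_{K_n}=5^{\omega(m_n)}$ (the totally ramified primes being exactly the $\omega(m_n)$ divisors of $m_n$, with $5$ among them since $5\mid n$), and by Leriche's theorem that the genus field of an abelian number field is a P\'olya field one gets $po_{K_n}\leq g_{K_n}=5\cdot 5^{\omega(m_n)-1}=5|Po(K_n)|$ via (\ref{pol}). What you do instead is construct that genus field explicitly --- the compositum $L=K_n\prod_{p\mid m_n}N_p$ with $N_p$ the quintic subfield of $\mathbb{Q}(\zeta_p)$ (resp. $\mathbb{Q}(\zeta_{25})$ for $p=5$) --- and verify by hand that it is P\'olya: the ambiguous class number formula shows each $N_p$ has trivial ambiguous class group so the totally ramified prime of $N_p$ is principal, and the control of inertia in the compositum (cyclicity of tame inertia for $p\equiv1\pmod 5$, and the one-dimensionality of the ramified part of $\mathbb{Q}_5^{\times}/(\mathbb{Q}_5^{\times})^5$ at the wild prime) lets you identify each Ostrowski ideal $\Pi_{p^f}(L)$ with the extension of a principal ideal. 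In effect you reprove the special cases of Ishida's genus-number computation and of Leriche's ``genus fields are P\'olya'' theorem that the paper imports as black boxes; your argument is longer but self-contained and exhibits the P\'olya extension realizing the bound, whereas the paper's is shorter at the cost of relying on those two external results. Both arguments yield the same bound $5^{\omega(m_n)}$, and your degree count $[L:K_n]\leq[F:\mathbb{Q}]\leq 5^{\omega(m_n)}$ is correct.
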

Let $K$ be a number field and $\theta \in \mathcal{O}_K$ be a primitive element. The index $[\mathcal{O}_K:\mathbb{Z}[\theta]]$ is called the index of $\theta$ in $K$ and is denoted by $I(\theta)$. The index of the number field $K$ is defined as $I(K)=\gcd\{I(\theta)\mid \theta\in\mathcal{O}_K \mbox{ and }K=\mathbb{Q}(\theta)\}$. If $I(K)>1$ then the number field $K$ is not monogenic, that is, $\mathcal{O}_K\neq\mathbb{Z}[\theta]$ for any $\theta\in K$. However the the converse is not true in general. That is, there exists non-monogenic number fields $K$ with $I(K)=1$. These are basically fields which are not monogenic, but not for a local reason (see \cite{IND} for more details).  In this line we prove the following theorem in this article.
\begin{thm}\label{NEW3}
Let $K_n$ be the family of Lehmer quintic fields such that $m_n$ is cube-free and $5\mid n$. Then $K_n$ is not monogenic and $I(K_n)=1$.
\end{thm}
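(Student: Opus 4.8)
The plan is to establish the two assertions by largely independent arguments, using throughout that $K_n/\mathbb{Q}$ is cyclic of degree $5$ (the defining feature of the Lehmer quintics) and that the primes ramifying in $K_n$ are exactly the prime divisors of $m_n$, as already used in the genus-theoretic computation behind \thmref{NEW1}. For the field index I would first recall that a rational prime can divide $I(K_n)$ only if it is smaller than $[K_n:\mathbb{Q}]=5$, so it suffices to show that neither $2$ nor $3$ is a common index divisor. Because $K_n/\mathbb{Q}$ is cyclic of prime degree, an unramified prime is either inert (one prime of residue degree $5$) or totally split (five primes of residue degree $1$), and in degree $5$ only the latter can be a common index divisor. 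A short congruence using $5\mid n$ shows $2,3\nmid m_n$, so both are unramified; hence I only need to rule out total splitting.

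Here the key observation is that if $p$ is totally split then some prime $\mathfrak{p}\mid p$ has residue field $\mathbb{F}_p$, and reducing the relation $f_n(\theta_n)=0$ modulo $\mathfrak{p}$ produces a root of $f_n$ in $\mathbb{F}_p$; this lets me avoid any discussion of whether $p$ divides $[\mathcal{O}_{K_n}:\mathbb{Z}[\theta_n]]$. It therefore suffices to check that $f_n \bmod 2$ has no root in $\mathbb{F}_2$ and that $f_n \bmod 3$ has no root in $\mathbb{F}_3$. This is a finite calculation: reducing the coefficients of $f_n$ according to $n\bmod 2$ (two cases) and $n\bmod 3$ (three cases) yields explicit quintics over $\mathbb{F}_2$ and $\mathbb{F}_3$, each of which I would verify is root-free by direct evaluation. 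Consequently $2$ and $3$ are inert, neither is a common index divisor, and $I(K_n)=1$.

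For non-monogenicity I would appeal to Gras's classification of monogenic cyclic fields of prime degree: for a prime $\ell\geq 5$, the only cyclic degree-$\ell$ field over $\mathbb{Q}$ with monogenic ring of integers is the maximal real subfield $\mathbb{Q}(\zeta_{2\ell+1})^+$, which exists only when $2\ell+1$ is prime. For $\ell=5$ this exceptional field is $\mathbb{Q}(\zeta_{11})^+$, in which $11$ is the unique ramified prime. Writing $n=5k$ gives $m_n=25\bigl(25k^4+25k^3+15k^2+5k+1\bigr)$, so $25\mid m_n$ and $5$ ramifies in $K_n$; in particular $K_n\neq\mathbb{Q}(\zeta_{11})^+$, and therefore $\mathcal{O}_{K_n}$ is not monogenic. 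Together with $I(K_n)=1$, this realizes $K_n$ as a non-monogenic field carrying no local obstruction to monogenicity.

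The substantive step is the non-monogenicity, whose entire weight rests on importing Gras's theorem correctly and confirming that $K_n$ is not the single exceptional field; the only genuine point to pin down is that $5$ really ramifies, which I get from $25\mid m_n$ together with the identification of ramified primes with the divisors of $m_n$ from \thmref{NEW1}. The index computation is routine but must be organized as a short case analysis in $n\bmod 2$ and $n\bmod 3$; the one conceptual ingredient there is the reduction-of-$\theta_n$ argument, which cleanly forces a root of $f_n$ modulo $p$ whenever $p$ splits completely.
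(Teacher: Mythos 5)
Your proposal is correct, and it reaches both conclusions by routes that differ in detail from the paper's. For non-monogenicity you and the paper both rest everything on Gras's theorem (the paper's Proposition \ref{gras}); the difference is how the single exceptional field is excluded. The paper observes that the maximal real subfield of a cyclotomic field is a P\'olya field while $K_n$ has $|Po(K_n)|\geq 5$, whereas you compare ramification: $25\mid m_n$ forces $5$ to ramify in $K_n$ (indeed $5^2$ divides the conductor by Lemma \ref{lem2}), while only $11$ ramifies in $\mathbb{Q}(\zeta_{11})^{+}$. Your version is marginally more robust, since it does not need the non-P\'olya hypothesis and so also disposes of the boundary case $n=0$, where $K_0$ is P\'olya. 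For $I(K_n)=1$ both arguments start from Zylinski's bound, which reduces the question to the primes $2$ and $3$, but then diverge: the paper uses the relation $d(\theta_n)=I(\theta_n)^2 d(K_n)$ together with the explicit discriminant $(n^3+5n^2+10n+7)^2 m_n^4$, which a quick congruence shows is coprime to $6$, so $I(\theta_n)$ and hence $I(K_n)$ is coprime to $6$. You instead invoke the Hensel-type characterization of common index divisors by splitting types, note that in a cyclic quintic an unramified prime is inert or totally split and only total splitting of $2$ or $3$ can obstruct, and then rule out total splitting by checking that $f_n$ has no root modulo $2$ (two cases in $n$) or modulo $3$ (three cases); I verified these five reductions and all are root-free. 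Your route needs an extra standard input (the splitting-type criterion for common index divisors) and a small case analysis, but it avoids the polynomial discriminant entirely; the paper's route is shorter given that the discriminant is already quoted. Both are complete proofs.
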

In Section 2, we develop some preliminaries required to prove \thmref{NEW1}. Section 3 contains the proof of \thmref{NEW1} and \corref{cor1}. In Section 4, we study the P\'olya numbers of Lehmer quintic fields and prove \thmref{NEW2}. In Section 5, we study the monogeneity of the non-P\'olya number fields $K_n$ and give the proof of \thmref{NEW3}. Finally in Section 7, we present some computations performed by us using SAGE.

\section{Preliminaries}

In this section we assume that the number field $K$ is a Galois extension of $\mathbb{Q}$. For any prime number $p$, the ramification index of $p$ in $K/\mathbb{Q}$ is denoted by $e_p$.  In \cite{JLC}, Chabert obtained a nice description for the cardinality of $Po(K)$ for cyclic extensions $K/\mathbb{Q}$.
\begin{prop}\label{CHABERT}
\cite{JLC} Assume that the extension $K/\mathbb{Q}$ is cyclic of degree $n$.
\begin{enumerate}
    \item If $K$ is real and $N(\mathcal{O}_K^{\times})=\{1\}$, then $|Po(K)|=\frac{1}{2n}\times\prod_p e_p$.
    \item In all other cases, $|Po(K)|=\frac{1}{n}\times\prod_p e_p$.
\end{enumerate}
\end{prop}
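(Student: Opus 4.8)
The plan is to identify $Po(K)$ with the strongly ambiguous ideal class group of the cyclic extension $K/\mathbb{Q}$ and then count it by a genus-theoretic index computation. Write $G=\mathrm{Gal}(K/\mathbb{Q})=\langle\sigma\rangle$, of order $n$. First I would recall the standard structural description of the Pólya group: the ideals $\mathfrak{J}_n(K)$ are built out of the ideals $\Pi_q=\prod_{N(\mathfrak{p})=q}\mathfrak{p}$ (product over primes of $\mathcal{O}_K$ of a fixed absolute norm $q$), and $Po(K)$ is generated by the classes $[\Pi_q]$. Since $K/\mathbb{Q}$ is Galois, $G$ acts transitively on the primes above each rational prime $p$, so the only nontrivial such ideals are $\Pi_p=\prod_{\mathfrak{p}\mid p}\mathfrak{p}$; moreover $\Pi_p=p\mathcal{O}_K$ is principal whenever $p$ is unramified. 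Hence $Po(K)$ is generated by the classes $[\Pi_p]$ with $p$ ramified, which is exactly the image in $Cl(K)$ of the group $\mathcal{I}^G$ of $G$-invariant fractional ideals. Therefore $Po(K)\cong \mathcal{I}^G/P^G$, where $P^G=\mathcal{I}^G\cap P_K$ is the group of $G$-invariant principal ideals.

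Next I would set up the counting. Because $G$ permutes the primes over $p$ transitively, a $G$-invariant ideal has constant valuation along each such orbit, so $\mathcal{I}^G$ is free abelian on $\{\Pi_p\}_p$. The subgroup $P_{\mathbb{Q}}$ of ideals extended from $\mathbb{Q}$ is generated by $p\mathcal{O}_K=\Pi_p^{e_p}$, whence $\mathcal{I}^G/P_{\mathbb{Q}}\cong\bigoplus_p \mathbb{Z}/e_p\mathbb{Z}$ has order $\prod_p e_p$. As $P_{\mathbb{Q}}\subseteq P^G$, this yields $|Po(K)|=\prod_p e_p\,/\,[P^G:P_{\mathbb{Q}}]$, and it remains to compute the index $[P^G:P_{\mathbb{Q}}]$.

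For that index I would use Hilbert 90. On $X=\{x\in K^\times:\sigma(x)/x\in\mathcal{O}_K^\times\}$ (exactly the $x$ with $(x)\in\mathcal{I}^G$), the map $x\mapsto\sigma(x)/x$ is a homomorphism into $\mathcal{O}_K^\times$ with kernel $\mathbb{Q}^\times$ and image the group of units of norm $1$ (every norm-$1$ unit is such a quotient by Hilbert 90). Passing to the further quotient by $\mathcal{O}_K^\times$ identifies $P^G/P_{\mathbb{Q}}\cong X/(\mathbb{Q}^\times\mathcal{O}_K^\times)$ with the Tate cohomology group $\hat{H}^{-1}(G,\mathcal{O}_K^\times)=\{u:N(u)=1\}/\{\sigma(v)/v:v\in\mathcal{O}_K^\times\}$. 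Thus $[P^G:P_{\mathbb{Q}}]=|\hat{H}^{-1}(G,\mathcal{O}_K^\times)|$, and the whole proposition reduces to computing this order, the part I expect to be the crux.

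Finally, I would evaluate $|\hat{H}^{-1}(G,\mathcal{O}_K^\times)|$ through the Herbrand quotient $Q(\mathcal{O}_K^\times)=|\hat{H}^0|/|\hat{H}^{-1}|$. Here $\hat{H}^0(G,\mathcal{O}_K^\times)=\{\pm1\}/N(\mathcal{O}_K^\times)$ has order $1$ or $2$ according to whether $-1$ is a norm of a unit. For $Q$ itself I would use the Dirichlet $G$-module structure $\mathcal{O}_K^\times\otimes\mathbb{Q}\cong (\bigoplus_{v\mid\infty}\mathbb{Q})\ominus\mathbf{1}$, the permutation module on the archimedean places minus the trivial line, together with multiplicativity of $Q$, the values $Q(\mathbb{Z})=n$, $Q(\mathbb{Z}[G])=1$, and Shapiro's lemma $Q(\mathbb{Z}[G/H])=|H|$. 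In the totally real case the places form a regular $G$-set, giving $Q(\mathcal{O}_K^\times)=1/n$; in the totally imaginary case they form $G/\langle c\rangle$ for complex conjugation $c$ (of order $2$), giving $Q(\mathcal{O}_K^\times)=2/n$, and there every unit has positive norm so $N(\mathcal{O}_K^\times)=\{1\}$. Combining: the totally imaginary case gives $|\hat{H}^{-1}|=n$; the totally real case gives $|\hat{H}^{-1}|=n$ when some unit has norm $-1$ and $|\hat{H}^{-1}|=2n$ when $N(\mathcal{O}_K^\times)=\{1\}$. Since a cyclic field with a real embedding is totally real, these three outcomes are exactly the two cases of the statement, and dividing $\prod_p e_p$ by $[P^G:P_{\mathbb{Q}}]$ finishes the proof. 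The main obstacle is this last step: correctly pinning down the $G$-module structure of the units and tracking the norm-of-units subtlety that separates the factor $\tfrac1n$ from $\tfrac1{2n}$.
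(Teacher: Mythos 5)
The paper does not prove this proposition; it is quoted verbatim from Chabert's article \cite{JLC}, so there is no internal proof to compare against. Your reconstruction is correct and is essentially the argument behind the cited result: you identify $Po(K)$ with the image of the $G$-invariant ideals $\mathcal{I}^G$ in $Cl(K)$ (via the Ostrowski--Zantema description of the $\mathfrak{J}_n(K)$ in terms of the $\Pi_q$), reduce $|Po(K)|=\prod_p e_p/[P^G:P_{\mathbb{Q}}]$ by the freeness of $\mathcal{I}^G$ on the $\Pi_p$, identify $[P^G:P_{\mathbb{Q}}]$ with $|\hat{H}^{-1}(G,\mathcal{O}_K^\times)|$ via Hilbert 90, and evaluate that order through the Herbrand quotient and the $G$-module structure of the units --- this is exactly the ambiguous class number formula over $\mathbb{Q}$ restricted to strongly ambiguous classes. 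All the delicate points are handled correctly: the dichotomy totally real versus totally imaginary for a Galois extension of $\mathbb{Q}$, the value $Q(\mathcal{O}_K^\times)=|H|/n$ with $H$ the decomposition group at infinity, the fact that units of a totally imaginary field have norm $+1$, and the resulting three outcomes collapsing to the two cases of the statement. The only (harmless) notational slip is writing $\Pi_p$ for the ideal $\Pi_{p^{f_p}}$ of primes of norm $p^{f_p}$.
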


As an immediate corollary of Proposition \ref{CHABERT}, we record the following observation.
\begin{cor}\label{cor2}
If $K/\mathbb{Q}$ is cyclic extension of degree $n$ and $n$ is odd, then $|Po(K)|=\frac{1}{n}\times\prod_p e_p$.
\end{cor}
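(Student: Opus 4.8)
The plan is to show that case (1) of \propref{CHABERT} cannot arise when $n$ is odd, forcing $K$ into case (2). Case (1) requires both that $K$ be real and that $N(\mathcal{O}_K^\times)=\{1\}$, where $N=N_{K/\mathbb{Q}}$ denotes the field norm. I would argue that the second condition already fails whenever $n$ is odd, regardless of the signature of $K$, so that the first condition need not even be examined.

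The decisive point is to evaluate the norm on the single unit $-1\in\mathcal{O}_K^\times$. By multiplicativity of the norm, $N_{K/\mathbb{Q}}(-1)=(-1)^{[K:\mathbb{Q}]}=(-1)^n$, and since $n$ is odd this equals $-1$. Hence $-1\in N(\mathcal{O}_K^\times)$, so $N(\mathcal{O}_K^\times)\neq\{1\}$. (In fact the norm of any unit lies in $\mathbb{Z}^\times=\{\pm1\}$, so one even obtains $N(\mathcal{O}_K^\times)=\{\pm1\}$ exactly.) The hypothesis of case (1) can therefore never hold, and \propref{CHABERT} (2) gives $|Po(K)|=\tfrac{1}{n}\prod_p e_p$.

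I do not foresee any genuine obstacle: the whole argument rests only on the parity of $[K:\mathbb{Q}]$ together with the standard multiplicativity of the norm. The one point meriting care is to confirm that the two cases in \propref{CHABERT} are mutually exclusive and exhaustive, so that negating the hypothesis of case (1) truly lands us in case (2); this is guaranteed by Chabert's phrasing \enquote{in all other cases}. It is also worth noting, as a sanity check, that a cyclic field of odd degree is automatically totally real (complex conjugation is an element of order dividing $2$ in a cyclic group of odd order, hence trivial), so the real hypothesis of case (1) is in fact always satisfied and it is precisely the norm condition that rules it out.
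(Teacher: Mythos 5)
Your proposal is correct and matches the paper's (implicit) reasoning: the paper records this as an immediate consequence of Proposition~\ref{CHABERT}, and the justification is exactly your observation that $-1\in\mathcal{O}_K^\times$ has norm $(-1)^n=-1$ for odd $n$, so the hypothesis $N(\mathcal{O}_K^\times)=\{1\}$ of case (1) fails and case (2) applies. Your sanity check that such a field is nonetheless totally real is a nice touch but not needed for the conclusion.
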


For a description of $Po(K)$ as in \cite{AL1}, we recall that an ideal class $[\mathfrak{a}]$ in $K$ is called an ambiguous ideal class if it is invariant under the action of $Gal(K/\mathbb{Q})$, that is, $$[\mathfrak{a}]^\tau=[\mathfrak{a}] \text{ for all }\tau\in Gal(K/\mathbb{Q}).$$ In \cite{AL1}, Leriche showed that for a Galois number field $K$, $Po(K)$ is the subgroup of $Cl(K)$ generated by the classes of the ambiguous ideals of $K$.

Now we state some results on Lehmer quintics. In \cite{RS}, Schoof and Washington showed that $f_n(x)$ is irreducible for all $n\in\mathbb{Z}$ and its discriminants equals to $(n^3+5n^2+10n+7)^2(n^4+5n^3+15n^2+25n+25)^4$. Let $\theta_n\in\mathbb{C}$ be a root of $f_n(x)=0$. If we set $K_n=\mathbb{Q}(\theta_n)$, then in \cite{RS}, it was shown that $K_n$ is cyclic field for all $n\in\mathbb{Z}$. We denote the ring of integers of $K_n$ by $\mathcal{O}_{K_n}$. Now we recall some results of Jeannin \cite{SJE} on the discriminant $d(K_n)$ of $K_n$.

\begin{lem}\label{lem1}
All the prime divisors $p \neq 5$ of $n^4+5n^3+15n^2+25n+25$ satisfy $p \equiv 1 \pmod 5$.
\end{lem}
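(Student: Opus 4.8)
The plan is to prove the sharper statement that the splitting field of the quartic $Q(X)=X^4+5X^3+15X^2+25X+25$ (so that $m_n=Q(n)$) is the cyclotomic field $\mathbb{Q}(\zeta_5)$, and then to read the congruence off Dedekind's factorization law. First I would factor $Q$ over $\mathbb{Q}(\sqrt5)$: forcing $Q(X)=(X^2+aX+5)(X^2+cX+5)$ gives $a+c=5$ and $ac=5$, hence $a,c=\tfrac{5\pm\sqrt5}{2}$ and $Q=\beta\bar\beta$ with $\beta(X)=X^2+\tfrac{5+\sqrt5}{2}X+5$ and $\bar\beta$ its conjugate over $\mathbb{Q}(\sqrt5)$. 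Since $Q$ has no rational root and no factorization into quadratics over $\mathbb{Q}$, it is irreducible, so a root $\theta$ generates a quartic field $K=\mathbb{Q}(\theta)$; from $\beta(\theta)=0$ one solves $\sqrt5=-2\theta-5-10\theta^{-1}\in K$, whence $\mathbb{Q}(\sqrt5)\subset K$ and $K$ is the full splitting field, i.e. $K/\mathbb{Q}$ is Galois of degree $4$.

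The crux is to pin down $K$. I would compute $\mathrm{disc}(Q)$ via the identity $\mathrm{disc}(\beta\bar\beta)=\mathrm{disc}(\beta)\,\mathrm{disc}(\bar\beta)\,\mathrm{Res}(\beta,\bar\beta)^2$. Here $\mathrm{disc}(\beta)=\tfrac{5\sqrt5-25}{2}$ and $\mathrm{disc}(\bar\beta)=\tfrac{-5\sqrt5-25}{2}$, whose product is $125$; and since evaluating $\bar\beta$ at a root of $\beta$ collapses (using $\beta(\theta)=0$) to $\bar\beta(\theta)=-\sqrt5\,\theta$, one gets $\mathrm{Res}(\beta,\bar\beta)=5\,\theta_1\theta_2=25$. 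Hence $\mathrm{disc}(Q)=125\cdot 25^2=5^7$. Because both the field discriminant $\mathrm{disc}(K)$ and the index $[\mathcal{O}_K:\mathbb{Z}[\theta]]$ divide $\mathrm{disc}(Q)=5^7$, the only prime that ramifies in $K$ or divides the index is $5$. A Galois quartic ramified only at $5$ cannot have group $(\mathbb{Z}/2\mathbb{Z})^2$, as that would require three distinct quadratic subfields ramified only at $5$, whereas $\mathbb{Q}(\sqrt5)$ is the unique such field; so $\mathrm{Gal}(K/\mathbb{Q})$ is cyclic, and the unique cyclic quartic field ramified only at $5$ is $\mathbb{Q}(\zeta_5)$. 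Thus $K=\mathbb{Q}(\zeta_5)$.

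With this in hand the conclusion is immediate. Let $p\neq 5$ be a prime dividing $m_n=Q(n)$. Then $p$ is unramified in $K$ and does not divide $[\mathcal{O}_K:\mathbb{Z}[\theta]]$, so by Dedekind's theorem the factorization type of $Q$ modulo $p$ equals the splitting type of $p$ in the Galois field $K$, all residue degrees being the order of $\mathrm{Frob}_p$ in $\mathrm{Gal}(K/\mathbb{Q})\cong(\mathbb{Z}/5\mathbb{Z})^{\times}$, i.e. the order of $p$ modulo $5$. But $Q(n)\equiv 0\pmod p$ exhibits the root $X=n$, hence a linear factor of $Q$ modulo $p$, forcing this order to be $1$; equivalently $p$ splits completely in $\mathbb{Q}(\zeta_5)$, that is $p\equiv 1\pmod 5$.

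I expect the identification $K=\mathbb{Q}(\zeta_5)$ to be the only real obstacle, since it is precisely the step that converts ``$p$ splits completely'' into the arithmetic congruence; the factorization over $\mathbb{Q}(\sqrt5)$, the discriminant computation, and the Dedekind bookkeeping are routine once set up. As an alternative I would note a softer, purely field-theoretic route that avoids the explicit splitting field: the primes $p\neq 5$ dividing $m_n$ ramify in the cyclic quintic $K_n$ (this is visible from the discriminant data of Jeannin recalled above), and any prime $p\neq 5$ ramifying in an abelian extension of degree $5$ must satisfy $5\mid p-1$, because its inertia group is a cyclic quotient of $\mathbb{F}_p^{\times}$ of order $5$. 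This yields the same conclusion but leans on the field discriminant of $K_n$ rather than on the elementary analysis of $Q$.
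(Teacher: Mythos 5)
The paper never proves this lemma: it is recalled verbatim from Jeannin \cite{SJE} along with the conductor formula, so there is no in-paper argument to compare against, and what you have written is a genuine self-contained proof. Your main line is correct: $Q(X)=X^4+5X^3+15X^2+25X+25$ does split in $\mathbb{Q}(\zeta_5)$, and once $K=\mathbb{Q}(\zeta_5)$ is established, Dedekind's theorem applied to the root $X=n$ of $Q$ modulo $p$ (legitimate since $p\neq5$ cannot divide $\mathrm{disc}(Q)=5^7$, hence neither the index nor the ramification) forces the Frobenius at $p$ to be trivial, i.e.\ $p\equiv1\pmod5$; note this needs no cube-free hypothesis, matching the lemma as stated.

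Two steps are under-justified, though both are repairable from data you already computed. First, irreducibility of $Q$ over $\mathbb{Q}$: ``no factorization into quadratics over $\mathbb{Q}$'' is asserted, not checked; the quickest fix is that $Q\equiv X^4+X^3+X^2+X+1\pmod 2$ is $\Phi_5$ over $\mathbb{F}_2$, irreducible because $2$ has order $4$ modulo $5$. Second, and more substantively, ``$\mathbb{Q}(\sqrt5)\subset K$, whence $K$ is the full splitting field'' is a non sequitur: a quartic field containing its quadratic resolvent subfield need not be Galois (compare $\mathbb{Q}(\sqrt[4]{2})\supset\mathbb{Q}(\sqrt2)$). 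What you get for free is only that $K$ contains both roots of $\beta$ (the second being $5/\theta$). To capture the roots of $\bar\beta$ as well, observe that $\mathrm{disc}(\beta)\,\mathrm{disc}(\bar\beta)=125=(5\sqrt5)^2$ is a square in $\mathbb{Q}(\sqrt5)$, so $\sqrt{\mathrm{disc}(\bar\beta)}=\pm 5\sqrt5\big/\sqrt{\mathrm{disc}(\beta)}\in K$ --- a one-line addition using numbers already present in your resultant computation. With that, the remainder (only $5$ ramifies; $\mathbb{Q}(\sqrt5)$ is the unique quadratic field unramified outside $5$, so the group is cyclic; Kronecker--Weber gives a unique cyclic quartic field unramified outside $5$, namely $\mathbb{Q}(\zeta_5)$) is sound. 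Finally, your ``softer alternative'' should not be leaned on: the conductor formula of Jeannin recalled right after this lemma is already indexed over primes $p\equiv1\pmod5$, so invoking it here is circular, and even granting the ramification statement it would miss prime divisors whose exponent in $m_n$ is divisible by $5$.
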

Next lemma gives the discriminant of the field $K_n$.
\begin{lem}\label{lem2} The discriminant $d(K_n)$ is given by $d(K_n)=f(K_n)^4$, where the conductor $f(K_n)$ of $K_n$ is given by 
\begin{equation}\label{lc}
    f(K_n)=5^b\!\!\!\!\!\!\!\!\!\!\!\!\!\!\!\!\!\!\!\!\!\!\!\!\!\!\!\!\!\!\!\prod_{\substack{p\equiv1\!\!\!\!\!\!\pmod5\\ v_p(n^4+5n^3+15n^2+25n+25)\not\equiv0\!\!\!\!\!\pmod5}}\!\!\!\!\!\!\!\!\!\!\!\!\!\!\!\!\!\!\!\!\!\!\!\!\!\!\!\!\!\!\!p.
\end{equation}
Here $v_p(k)$ denotes the exponent of the highest power of the prime $p$ dividing a non-zero integer $k$ and 
\begin{equation}\label{b}
    b=
    \begin{cases}
    0, \text{ if } 5\nmid n,\\
    2, \text{ if } 5\mid n.
    \end{cases}
\end{equation}
\end{lem}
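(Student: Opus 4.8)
The plan is to combine the cyclicity of $K_n$ with the conductor--discriminant formula to get the shape $d(K_n)=f(K_n)^4$ for free, and then to determine $f(K_n)$ one prime at a time by local ramification analysis. First I would record that $K_n/\mathbb{Q}$ is cyclic of degree $5$ (Schoof--Washington). Writing $G=\mathrm{Gal}(K_n/\mathbb{Q})\cong\mathbb{Z}/5\mathbb{Z}$, every nontrivial character $\chi\in\widehat{G}$ has trivial kernel (as $5$ is prime), so each of the four nontrivial characters cuts out all of $K_n$ and hence has Artin conductor equal to the conductor $f(K_n)$. The conductor--discriminant formula $d(K_n)=\prod_{\chi\in\widehat{G}}\mathfrak{f}(\chi)$ then collapses to $d(K_n)=f(K_n)^4$, which is the first assertion. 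It remains to identify $f(K_n)$, i.e.\ to determine the ramified primes and their conductor exponents.

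Second, the primes ramifying in $K_n$ are exactly those dividing $f(K_n)$, and for a prime $p\neq 5$ the tame inertia (cyclic of order $5$) forces $5\mid p-1$, so $p\equiv 1\pmod 5$ and the conductor exponent is $1$; thus the prime-to-$5$ part of $f(K_n)$ is squarefree and supported on such $p$, in agreement with \lemref{lem1}. The heart of the matter is to show that, among these, $p$ ramifies precisely when $v_p(m_n)\not\equiv 0\pmod 5$. Here I would pass to $L=K_n(\zeta_5)$, which is cyclic of degree $5$ over $F=\mathbb{Q}(\zeta_5)$ and hence a Kummer extension $L=F(\sqrt[5]{\alpha})$ for a suitable $\alpha\in F^{\times}$. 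Since $p\equiv 1\pmod 5$ is unramified in $F/\mathbb{Q}$ and splits completely there, $p$ ramifies in $K_n/\mathbb{Q}$ if and only if some prime $\mathfrak{p}\mid p$ of $F$ ramifies in $L/F$, which by Kummer theory happens exactly when $v_{\mathfrak{p}}(\alpha)\not\equiv 0\pmod 5$.

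The key computation is then to identify the Kummer datum well enough to prove $v_{\mathfrak{p}}(\alpha)\equiv c\,v_p(m_n)\pmod 5$ for some unit $c\in(\mathbb{Z}/5\mathbb{Z})^{\times}$; granting this, the tame part of $f(K_n)$ is exactly the product of those primes $p\equiv 1\pmod 5$ with $5\nmid v_p(m_n)$, as claimed. I expect this identification --- tying the abstract Kummer generator of $L/F$ to the explicit value $m_n$ --- to be the main obstacle, and the place where the detailed arithmetic of the Lehmer family (or, alternatively, a Newton--polygon/Ore analysis of $f_n$ over $\mathbb{Q}_p$ detecting total ramification from $v_p(m_n)\bmod 5$) must be carried out.

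Finally I would treat $p=5$ separately. A cyclic quintic field is either unramified at $5$ or has $5$-part of its conductor equal to $5^2$, since the only ramified cyclic quintic local field at $5$ is the degree-$5$ subfield of $\mathbb{Q}(\zeta_{25})$, of conductor $25$; hence $b\in\{0,2\}$. Reducing modulo $5$ shows that when $5\mid n$ one has $f_n(x)\equiv(x+1)^5\pmod 5$ together with $v_5(m_n)=2$, and a Newton--polygon analysis of $f_n$ over $\mathbb{Q}_5$ then exhibits $5$ as wildly ramified, giving $b=2$; when $5\nmid n$ one has $m_n\equiv n^4\equiv 1\pmod 5$, so $5\nmid m_n$, and the corresponding local analysis shows $5$ is unramified, giving $b=0$. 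Combining the contribution at $5$ with the tame contribution above yields the asserted formula for $f(K_n)$ and completes the proof.
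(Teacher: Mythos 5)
The paper does not prove this lemma at all: it is quoted verbatim from Jeannin \cite{SJE} (``Now we recall some results of Jeannin\dots''), so there is no internal proof to measure your argument against. Judged on its own terms, your framework is the right one and its outer layers are sound: the conductor--discriminant formula does collapse to $d(K_n)=f(K_n)^4$ because every nontrivial character of a cyclic group of prime order is faithful; tameness at $p\neq 5$ does force $p\equiv 1\pmod 5$ and conductor exponent $1$; and passing to the Kummer extension $L=K_n(\zeta_5)=F(\sqrt[5]{\alpha})$ over $F=\mathbb{Q}(\zeta_5)$ correctly reduces ramification at $p$ to the condition $v_{\mathfrak{p}}(\alpha)\not\equiv 0\pmod 5$. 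The $p=5$ discussion also reaches the right conclusion $b\in\{0,2\}$, though your justification is slightly off: there are five (not one) ramified cyclic quintic extensions of $\mathbb{Q}_5$, all of conductor $5^2$, which is what actually gives $b=2$ in the ramified case.

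The genuine gap is the step you yourself flag as ``the main obstacle'': proving $v_{\mathfrak{p}}(\alpha)\equiv c\,v_p(m_n)\pmod 5$, i.e.\ tying the Kummer generator of $L/F$ to the explicit quantity $m_n=n^4+5n^3+15n^2+25n+25$. This is not a technical detail to be deferred --- it \emph{is} the lemma. Everything you establish before that point only says that $f(K_n)$ is $5^b$ times a squarefree product of primes $\equiv 1\pmod 5$; which primes occur is precisely the content of the displayed formula, and that is determined by nothing you have written down. Carrying it out requires the specific arithmetic of the Lehmer family (Jeannin's computation, or equivalently an Ore/Newton-polygon analysis of $f_n$ at each $p\mid m_n$ showing that $p$ is totally ramified exactly when $5\nmid v_p(m_n)$ and contributes only to the index $[\mathcal{O}_{K_n}:\mathbb{Z}[\theta_n]]$ otherwise). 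Similarly, your claim that $5$ is unramified when $5\nmid n$ needs an argument beyond $5\nmid m_n$, since the other discriminant factor $(n^3+5n^2+10n+7)$ can be divisible by $5$ in that case. As it stands the proposal is a correct reduction of the lemma to its essential computation, not a proof of it.
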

We quote the following result due to Erd\"os \cite{PER} which plays a crucial role in the proof of our main theorem.
\begin{thm}\label{ero}
\cite{PER} Let $f(x)$ be a polynomial of degree $l\geq3$ whose coefficients are integers with the highest common factor 1 and the leading coefficient is positive. Assume that $f(x)$ is not divisible by the $(l-1)$-th power of a linear polynomial with integer coefficient. Then there are infinitely many positive integers $n$ for which $f(n)$ is $(l-1)$-th power free.
\end{thm}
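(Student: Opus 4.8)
The plan is to prove the statement directly by a sieve argument, showing that the set of $n\in\{1,\dots,N\}$ for which $f(n)$ is \emph{not} $(l-1)$-th power free has density strictly less than one; then infinitely many admissible $n$ survive. Writing $k=l-1\ge 2$, an integer is not $k$-free exactly when it is divisible by $p^{k}$ for some prime $p$, so the basic object is
\[
\rho_k(p)=\#\{\,a \bmod p^{k} : f(a)\equiv 0 \pmod{p^{k}}\,\},
\]
and the expected ``bad'' density is governed by $\sum_{p}\rho_k(p)/p^{k}$. Since $|f(n)|\le C N^{l}$ on $[1,N]$, only primes with $p^{k}\le CN^{l}$, i.e. $p\le c\,N^{l/(l-1)}$, can occur, and I would split this range into small primes $p\le \xi$, moderate primes $\xi<p\le N$, and large primes $N<p\le cN^{l/(l-1)}$.

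First I would show that the hypotheses on $f$ force $\rho_k(p)\le l$ for all but finitely many $p$. For primes not dividing the relevant discriminant data, $f$ has only simple roots modulo $p$, each of which lifts uniquely by Hensel's lemma, so $\rho_k(p)$ equals the number of roots of $f$ modulo $p$, at most $l$. The finitely many remaining primes, together with the behaviour at repeated factors of $f$, are controlled precisely because $f$ is not divisible by $(x-a)^{l-1}$: a rational root of multiplicity $e$ contributes about $p^{\,k-\lceil k/e\rceil}$ residues to $\rho_k(p)$, and the hypothesis guarantees $e\le l-2$, so $\lceil k/e\rceil\ge 2$ and the corresponding densities stay summable. (If instead $f=(x-a)^{l-1}g(x)$, then the class $a \bmod p$ alone contributes, giving $\rho_k(p)\gtrsim p^{k-1}$, so $\sum_p \rho_k(p)/p^{k}\gtrsim \sum_p 1/p$ diverges and the argument necessarily breaks down; this is exactly the obstruction the hypothesis removes.) Together with the content-$1$ assumption, which keeps $f\not\equiv 0\pmod p$, this yields $\sum_{p}\rho_k(p)/p^{k}<\infty$, where $k=l-1\ge 2$ is what makes the tail $\sum_p l/p^{k}$ converge.

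With this in hand I would treat the small primes by the Chinese Remainder Theorem: the density of $n$ avoiding $p^{k}\mid f(n)$ for every $p\le \xi$ is exactly $\prod_{p\le \xi}\bigl(1-\rho_k(p)/p^{k}\bigr)$, which decreases to a positive constant $c_{0}=\prod_{p}\bigl(1-\rho_k(p)/p^{k}\bigr)>0$ as $\xi\to\infty$, the positivity of each factor again coming from the hypothesis. For the moderate primes $\xi<p\le N$, I would use $A_p(N):=\#\{n\le N: p^{k}\mid f(n)\}\le \rho_k(p)(N/p^{k}+1)\le lN/p^{k}+l$; summing, the main terms give $lN\sum_{p>\xi}p^{-k}$, an arbitrarily small multiple of $N$ once $\xi$ is large, while the error terms contribute $O(\pi(N))=o(N)$.

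The genuinely large primes $N<p\le cN^{l/(l-1)}$ form the main obstacle, since here $p^{k}>N$ and the Chinese Remainder heuristic is unavailable. The one favourable feature, and where $l\ge 3$ is essential, is that a single value $f(n)$ can be divisible by $p^{k}$ for at most one prime $p>N$: two such primes would force $(p_1p_2)^{l-1}\le |f(n)|\le CN^{l}$ while $(p_1p_2)^{l-1}>N^{2(l-1)}$, impossible for $l\ge 3$ and large $N$. Thus the bad $n$ in this range inject, up to an $l$-to-one ambiguity, into pairs $(p,t)$ with $f(n)=p^{k}t$ and $1\le t\le CN$, and one must bound the number of $n\le N$ for which $f(n)/t$ is a perfect $k$-th power, i.e. count integral points on the curves $f(x)=t\,y^{k}$ uniformly in $t$. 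Carrying out this count so that the total is $o(N)$ is the crux of Erd\H{o}s's argument and the step I expect to be hardest; granting it, the good density is at least $c_{0}-\varepsilon>0$ for a suitable cut-off $\xi$, and infinitely many $n$ with $f(n)$ $(l-1)$-th power free follow.
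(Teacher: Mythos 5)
The paper offers no proof of this statement---it is quoted verbatim from Erd\H{o}s \cite{PER}---so your proposal can only be judged against the known argument, and judged that way it has a genuine gap precisely where you flag one. The skeleton is right: sieving on $p^{l-1}\mid f(n)$, the convergence of $\sum_p \rho_{l-1}(p)p^{-(l-1)}$, the observation that a linear factor of multiplicity $l-1$ is exactly the configuration producing $\rho_{l-1}(p)\gg p^{l-2}$ and a divergent sum, the Chinese Remainder treatment of $p\le\xi$, and the bound $\rho_{l-1}(p)(N/p^{l-1}+1)$ for $\xi<p\le N$ are all correct and standard. But for the range $N<p\ll N^{l/(l-1)}$ nothing you have written gives any saving: the trivial bound contributes at most $l$ values of $n$ per prime, and summing over $\pi(cN^{l/(l-1)})$ primes gives $\gg N^{l/(l-1)}/\log N$, which swamps $N$. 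Controlling this range---via Erd\H{o}s's combinatorial count of pairs $n<n'\le N$ whose values share a large prime-power divisor, or equivalently a bound, uniform in $t\le CN$, for integral points on $f(x)=ty^{l-1}$---\emph{is} the theorem; ``granting it'' grants everything. As written the proposal is a correct outline of the easy part plus an accurate description of the hard part, not a proof.

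A secondary error: you assert that positivity of each local factor $1-\rho_{l-1}(p)/p^{l-1}$ ``comes from the hypothesis,'' but it does not follow from content $1$ together with the linear-factor condition. Take $f(x)=x(x-1)(x-2)(x-3)$: it has degree $4$, content $1$, positive leading coefficient, and no linear factor to the third power, yet $8\mid f(n)$ for every $n$, so $\rho_3(2)=2^3$, the factor at $p=2$ vanishes, and in fact $f(n)$ is never $3$-free. The statement as quoted is missing the hypothesis that $f$ has no fixed $(l-1)$-th power prime divisor (the condition that does appear in Theorem~\ref{Hel}); with that hypothesis your positivity claim is correct, and the omission is harmless for the paper's application since $g(k)=25k^4+25k^3+15k^2+5k+1$ has fixed divisor $1$. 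You should state that hypothesis explicitly rather than attribute the positivity to the content condition.
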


Now we state a deep result of on power-free values of polynomials (see \cite{HH1,HH2, TREUSS}).
\begin{thm}\label{Hel}
Let $f(x)\in\mathbb{Z}[x]$ be an irreducible polynomial of degree $d \geq3$ and assume that
$f$ has no fixed $(d-1)$th power prime divisor. Define
$$N_f'(X)=\#\{p\leq X : p \text{ prime}, f(p) \text{ is } (d-1)\text{-free}\}.$$
Then, for any $C>1$, we have 
$$N_f'(X)=c_f'\pi(X)+O_{C,f}\left(\frac{X}{(\log X)^C}\right),$$
as $x\rightarrow\infty$, where 
$$c_f'=\prod_{p}\left(1-\frac{\rho'(p^{d-1})}{\phi(p^{d-1})}\right)$$
and 
$$\rho'(d)=\#\{n\!\!\!\!\!\pmod d : (d,n)=1, d \mid f(n) \}.$$
\end{thm}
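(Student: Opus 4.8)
The plan is to establish the asymptotic by a power-free sieve, detecting the $(d-1)$-free condition with the M\"obius function and splitting the resulting sum according to the size of the sifting modulus, the equidistribution of primes in progressions supplying the main term. Throughout set $Q=d-1\geq 2$ and use
\[
N_f'(X)=\sum_{p\leq X}\ \sum_{k\,:\,k^{Q}\mid f(p)}\mu(k)=\sum_{k\geq 1}\mu(k)\,\#\{p\leq X:\ k^{Q}\mid f(p)\},
\]
where only squarefree $k$ contribute and, by the Chinese remainder theorem, the number of admissible residues modulo $k^{Q}$ is the multiplicative function $\rho'(k^{Q})=\prod_{q\mid k}\rho'(q^{Q})$. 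I would fix a parameter $z=(\log X)^{A}$ with $A$ large in terms of $C$ and split the $k$-sum into the ranges $k\leq z$, $z<k\leq X^{1/Q}$, and $X^{1/Q}<k\ll X^{d/Q}$, the last bound being forced since $k^{Q}\leq|f(p)|\ll X^{d}$.

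In the range $k\leq z$ I would extract the main term. For each residue $a$ modulo $k^{Q}$ with $k^{Q}\mid f(a)$ and $(a,k)=1$, the Siegel--Walfisz theorem gives $\pi(X;k^{Q},a)=\pi(X)/\phi(k^{Q})+O\!\bigl(X\exp(-c\sqrt{\log X})\bigr)$; summing over the $\rho'(k^{Q})$ admissible residues and over $k\leq z$ produces
\[
\pi(X)\sum_{k\leq z}\frac{\mu(k)\rho'(k^{Q})}{\phi(k^{Q})}+O\!\bigl(z^{Q}X\exp(-c\sqrt{\log X})\bigr).
\]
Since $\rho'(q^{Q})\leq d$ for $q\nmid\operatorname{disc}f$ (roots modulo $q^{Q}$ lift uniquely from simple roots modulo $q$ by Hensel's lemma), one has $\rho'(k^{Q})/\phi(k^{Q})\ll_{\varepsilon}k^{\varepsilon-Q}$, so the series $\sum_{k}\mu(k)\rho'(k^{Q})/\phi(k^{Q})$ converges absolutely (here $Q\geq 2$ is used) to the Euler product $c_f'$, and completing the partial sum costs $O(\pi(X)/z^{\,Q-1})$; the hypothesis that $f$ has no fixed $(d-1)$th power prime divisor forces $\rho'(q^{Q})<\phi(q^{Q})$ for every $q$, so each Euler factor, hence $c_f'$, is positive. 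The Siegel--Walfisz error is absorbed into $O(X/(\log X)^{C})$ for the chosen $z$.

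It then remains to bound the tail $k>z$. In the medium range $z<k\leq X^{1/Q}$, where $k^{Q}\leq X$, the elementary estimate $\#\{n\leq X:k^{Q}\mid f(n)\}\ll X\rho'(k^{Q})/k^{Q}+\rho'(k^{Q})$ yields a total $\ll X\sum_{k>z}\rho'(k^{Q})/k^{Q}\ll X/z^{\,Q-1}$, small because $Q\geq 2$. The genuinely hard part, and the main obstacle, is the large range $X^{1/Q}<k\ll X^{d/Q}$, where $k^{Q}>X$ and the averaging bound $X/k^{Q}$ is no longer available: one must show that the number of pairs $(p,k)$ with $p\leq X$ and $k^{Q}\mid f(p)$ is $O(X/(\log X)^{C})$. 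Here $k^{Q}\mid f(p)$ forces $f(p)=k^{Q}m$ with $|m|\ll X^{d}/k^{Q}$ small, so one is counting integer points on the curves $f(x)=k^{Q}y$, and controlling their number is precisely the deep power-free sieve of Helfgott and Reuss. The decisive structural feature is that $Q=d-1$ lies strictly below the degree $d$, leaving enough room for the determinant and integer-point methods (equivalently Hooley's $q$-analogue of the squarefree sieve) to succeed, which is why the hypothesis $d\geq 3$ is imposed; the restriction to prime arguments is accommodated by dominating the prime count by the count over all integers in this range. I expect essentially all the difficulty to reside in this large-modulus estimate, and combining the three ranges yields $N_f'(X)=c_f'\pi(X)+O_{C,f}(X/(\log X)^{C})$.
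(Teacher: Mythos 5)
The paper offers no proof of this statement at all: Theorem~\ref{Hel} is imported verbatim from the literature (Helfgott \cite{HH1,HH2} and Reuss \cite{TREUSS}) and used as a black box in the proof of Corollary~\ref{cor1}. So there is no internal argument to compare yours against; the only fair comparison is with the cited sources.

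Your sketch correctly sets up the standard power-free sieve: the M\"obius detection of the $(d-1)$-free condition, the Siegel--Walfisz extraction of the main term $c_f'\pi(X)$ over moduli $k\leq z=(\log X)^A$ (including the correct observations that $\rho'$ is multiplicative, that $\rho'(k^{Q})/\phi(k^{Q})\ll_\varepsilon k^{\varepsilon-Q}$ away from the discriminant so the Euler product converges for $Q=d-1\geq 2$, and that the no-fixed-divisor hypothesis makes every factor positive), and the elementary average bound for $z<k\leq X^{1/Q}$. All of that is sound. However, the range $k>X^{1/Q}$, where $k^{Q}>X$ and trivial averaging fails, is not proved in your proposal; you name it as ``precisely the deep power-free sieve of Helfgott and Reuss'' and move on. That estimate is not a technical afterthought --- it is the entire content of the theorem, requiring the determinant/integer-point counting machinery (or Hooley's approach) applied to the curves $f(x)=k^{Q}y$, together with the extra work needed to restrict to prime arguments $x=p$ rather than dominating by all integers (the trivial domination you suggest does not obviously save the required $(\log X)^{C}$ over the count of \emph{all} $n\leq X$ unless one already has a power-saving there). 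As a self-contained proof the proposal therefore has a genuine gap at the large-modulus range; as an account of how the cited proof is organized, it is accurate and locates the difficulty in the right place.
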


Let $f(x)$ be an irreducible polynomial with integral coefficients and $f(m)>0$ for $m=1,2,\dots$. Let $\omega(m)$ denote the number of distinct primes dividing $m$. Then for primes $p$, the following result due to Halberstam \cite{HAL} determines the distribution of values of $\omega(f(p))$.
\begin{thm}\label{ham} \cite{HAL}
Let $f(X) \in \mathbb{Z}[X]$ be any non-constant polynomial. For all but $o(X/ \log X)$ primes $p \leq X$,
\begin{equation*}
    \omega\Big(f(p)\Big)=\Big(1+o(1)\Big){\log \log n}.
\end{equation*}
\end{thm}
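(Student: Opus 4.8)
The plan is to prove this as a normal-order (concentration) statement by Tur\'an's second-moment method, exactly as in the Hardy--Ramanujan theorem for $\omega(n)$, but transported from the integers to the sparse sequence $\{f(p):p\le X\}$. First I record that the right-hand side should read $(1+o(1))\log\log X$ (equivalently $\log\log p$): since $|f(p)|\asymp p^{\deg f}$ we have $\log\log|f(p)|=\log\log X+O(1)$, so all three normalizations agree to leading order, and the printed ``$\log\log n$'' is a typo for $\log\log X$. Write
\[
\omega(f(p))=\sum_{q\ \mathrm{prime}}\mathbf{1}\{q\mid f(p)\},
\]
and truncate the sum at a cutoff $z=X^{1/2-\varepsilon}$; a genuine power of $X$ is forced here, because only then does $\sum_{q\le z}1/q=\log\log z=\log\log X+O(1)$ accumulate the full main term, and prime divisors $q>z$ of $f(p)$ number at most $\log|f(p)|/\log z=O(1)$ and are harmless. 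The single arithmetic input is the count $\#\{p\le X:q\mid f(p)\}$: by the Chinese Remainder Theorem this is a union of $\rho'(q)=\#\{a\bmod q:(a,q)=1,\ q\mid f(a)\}$ residue classes (the quantity from \thmref{Hel}), and the prime number theorem in arithmetic progressions gives $\#\{p\le X:q\mid f(p)\}=\tfrac{\rho'(q)}{\phi(q)}\pi(X)+(\text{error})$, with the errors controlled \emph{on average over $q\le z$} by the Bombieri--Vinogradov theorem.

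For the first moment, summing over $q\le z$ gives mean value $\pi(X)\sum_{q\le z}\rho'(q)/\phi(q)$. By the prime ideal theorem (Chebotarev) the average of $\rho'(q)$ over primes $q$ equals the number $g$ of distinct irreducible factors of $f$, since each irreducible factor contributes on average one root of Frobenius; coprimality removes at most the residue $0$ and is irrelevant. Partial summation against Mertens' theorem then yields $\sum_{q\le z}\rho'(q)/\phi(q)=g\log\log z+O(1)=g\log\log X+O(1)$. In the applications of this paper $f$ is irreducible, so $g=1$ and the mean is $(1+o(1))\log\log X$, matching the stated constant.

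For the second moment I expand $\sum_{p\le X}\big(\sum_{q\le z}\mathbf{1}\{q\mid f(p)\}\big)^2$ into pairs. For $q_1\ne q_2$ the event $q_1q_2\mid f(p)$ is a congruence condition mod $q_1q_2$ with $\rho'(q_1)\rho'(q_2)$ admissible residues, so the off-diagonal sum factors as $\big(\sum_{q\le z}\rho'(q)/\phi(q)\big)^2+o((\log\log X)^2)$, while the diagonal $q_1=q_2$ contributes only $O(\log\log X)$. Hence the variance is $O(\log\log X)=o((\log\log X)^2)$, and Chebyshev's inequality shows that for every fixed $\varepsilon>0$ the set of primes $p\le X$ with $|\omega(f(p))-\log\log X|>\varepsilon\log\log X$ has size $o(\pi(X))=o(X/\log X)$, which is precisely the assertion.

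The main obstacle is uniformity in the modulus $q$ when passing from the heuristic density $\rho'(q)/\phi(q)$ to an honest count of primes in progressions. Pointwise Siegel--Walfisz is useless here: it is valid only for $q\le(\log X)^A$, and $\sum_{q\le(\log X)^A}1/q=\log\log\log X+O(1)$ falls far short of $\log\log X$. The fix is to reach moduli up to a power of $X$, which is exactly why Bombieri--Vinogradov (average uniformity for $q\le X^{1/2-\varepsilon}$) is needed for the first moment, and why the crude but uniform Brun--Titchmarsh bound $\pi(X;q,a)\ll X/(\phi(q)\log(X/q))$ is used to bound the diagonal and off-diagonal terms in the second moment; reconciling the cutoff $z$ with these error terms is the only delicate point, the remainder being the standard variance-to-concentration argument.
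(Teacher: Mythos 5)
First, a point of order: the paper offers no proof of this statement — it is quoted (with a misprint, $\log\log n$ for $\log\log X$) from Halberstam's 1956 paper — so there is no internal argument to compare yours against. Your reconstruction via the Tur\'an second-moment method is the natural modern route (Halberstam's original argument predates Bombieri--Vinogradov and necessarily runs through Brun's sieve instead), and you correctly flag both the misprinted normalization and the fact that the constant in front of $\log\log X$ is really the number $g$ of distinct irreducible factors of $f$, so the statement as printed implicitly requires $f$ irreducible — which it is in the paper's application. Your first moment, including the use of $\rho'(q)\le\deg f$ to reduce the averaged error to the standard Bombieri--Vinogradov sum and the Landau--Mertens evaluation $\sum_{q\le z}\rho'(q)/\phi(q)=g\log\log z+O(1)$, is sound.

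There is, however, a genuine gap in your second moment. With the cutoff $z=X^{1/2-\varepsilon}$ the off-diagonal moduli $q_1q_2$ range up to $X^{1-2\varepsilon}$, outside the Bombieri--Vinogradov range, so you cannot assert the asymptotic ``off-diagonal sum $=(\sum_{q\le z}\rho'(q)/\phi(q))^2+o((\log\log X)^2)$''; and the substitute you propose for these terms, Brun--Titchmarsh, only gives an upper bound with an absolute constant $C\ge 2$ in front. An upper bound of $C(\log\log X)^2$ on the normalized second moment with any fixed $C>1$ yields only $\mathrm{Var}\le (C-1+o(1))(\log\log X)^2$, which is not $o((\log\log X)^2)$, and Chebyshev then gives nothing: the variance argument needs the off-diagonal constant to be exactly $1$, i.e.\ a true asymptotic. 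The standard repair is to shrink the cutoff to $z=X^{1/4-\varepsilon}$: then $q_1q_2\le X^{1/2-2\varepsilon}$ lies inside the Bombieri--Vinogradov range, each such modulus carries only $\rho'(q_1)\rho'(q_2)=O(1)$ residue classes and arises from $O(1)$ pairs, so the averaged error term still applies, and the main term is unchanged because $\log\log\bigl(X^{1/4-\varepsilon}\bigr)=\log\log X+O(1)$; the discarded primes $q\in(X^{1/4-\varepsilon},X^{1/2-\varepsilon}]$ divide $f(p)$ for at most $O(1)$ values of $q$ per $p$ and are absorbed exactly like your $q>z$ tail. With that single change your argument closes.
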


Now we state some results on the number of integral solutions of Diophantine equation of the type 
\begin{equation}\label{Pe2}
Y^m=f(X).
\end{equation} 
When $m=2$ and $f(x)$ is a monic qartic polynomial then the following result due to Masser \cite{MAS}, gives us the specific bound on the integral solutions to the curve.
\begin{thm}\label{mass}\cite{MAS}
Consider the diophantine equation 
$$Y^2 = f(X),$$
where $f(X)$ is a polynomial of degree four with integer coefficients. Assume that $f(X)$ is monic and not a perfect square. Then any integer solution $(x, y)$ of the above equation satisfies
$$|x| \leq 26 H(f)^3,$$
where $H(f)$ denotes the maximum of the absolute values of the coefficients of $f(X)$.
\end{thm}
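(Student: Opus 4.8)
The plan is to reduce the quartic to a polynomial square plus a linear remainder and then exploit integrality. Write $f(X)=X^4+a_3X^3+a_2X^2+a_1X+a_0$ and set $H=H(f)=\max_i|a_i|$. First I would complete the square at the top by putting $P(X)=X^2+\tfrac{a_3}{2}X+c$ with $c=\tfrac{a_2}{2}-\tfrac{a_3^2}{8}$, chosen so that $P(X)^2$ agrees with $f$ in its three highest coefficients. Then $R(X):=f(X)-P(X)^2=\lambda X+\mu$ has degree at most $1$, with $\lambda=a_1-a_3c$ and $\mu=a_0-c^2$. Since $f$ is monic with integer coefficients, Gauss's lemma forbids $f=P^2$ in $\mathbb{Q}[X]$ (a monic rational square root of a monic integral polynomial is integral), so the hypothesis that $f$ is not a perfect square guarantees $R\not\equiv 0$.

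Next I would clear denominators: the polynomial $Q:=8P=8X^2+4a_3X+(4a_2-a_3^2)$ has integer coefficients, so for any integer solution $(x,y)$ both $8y$ and $Q(x)$ are integers, and
$$(8y-Q(x))(8y+Q(x))=64y^2-Q(x)^2=64(f(x)-P(x)^2)=64(\lambda x+\mu).$$
The crux is then the case $\lambda x+\mu\neq 0$. Here both factors on the left are nonzero integers, so $|8y-Q(x)|\geq 1$, whence $|8y+Q(x)|\leq 64|\lambda x+\mu|$. Taking $y\geq 0$ without loss of generality (solutions occur in pairs $(x,\pm y)$), we have $8y+Q(x)\geq Q(x)\geq 8x^2-4H|x|-(4H+H^2)$, while the coefficient estimates $|\lambda|\leq H+\tfrac{H^2}{2}+\tfrac{H^3}{8}$ and $|\mu|\leq H+c^2$ control the right-hand side. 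Combining these produces a quadratic inequality of the shape $8x^2\leq(8H^3+\cdots)|x|+O(H^4)$, whose positive root is of size $H^3+O(H)$; bookkeeping then places $|x|$ comfortably below $26H^3$.

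It remains to treat the degenerate case $\lambda x+\mu=0$, where $x=-\mu/\lambda=(c^2-a_0)/(a_1-a_3c)$ is the unique rational root of $R$. A direct estimate shows this value is far smaller than $26H^3$, the point being that when $|c|$ is large the numerator $c^2$ and the denominator $a_3c$ grow together, so no cancellation can inflate the quotient. I expect the principal difficulty to be not the structure but the \emph{constant-chasing}: locating precisely the regime in which the quadratic inequality becomes effective, tracking the lower-order terms in $H$, and confirming the degenerate bound uniformly, so as to land on the clean value $26$ (which my sketch suggests is generous, the honest bound being of order $H^3$).
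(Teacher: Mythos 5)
This statement is imported verbatim from Masser's article \cite{MAS}; the paper gives no proof of it, so there is nothing internal to compare against. Judged on its own terms, your sketch follows what is in fact Masser's method (Runge's method for a monic quartic): complete the square so that $f=P^2+R$ with $\deg R\le 1$, clear denominators via $Q=8P$, factor $64y^2-Q(x)^2=(8y-Q(x))(8y+Q(x))=64(\lambda x+\mu)$, and use the integrality of both factors to force $Q(x)\le 64|\lambda x+\mu|$ in the non-degenerate case. That part is sound, and the constant-chasing you defer does work out: with $|\lambda|\le H+\tfrac{H^2}{2}+\tfrac{H^3}{8}$ and $|\mu|\le H+c^2$ one lands at roughly $|x|\le 15H^3$, comfortably inside $26H^3$.

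The one place where your argument as written has a real gap is the degenerate case $\lambda x+\mu=0$, which you dismiss with ``a direct estimate shows this value is far smaller than $26H^3$.'' The direct estimate does \emph{not} suffice: the only a priori lower bound on the denominator is $|\lambda|\ge \tfrac18$ (since $8\lambda=8a_1-4a_2a_3+a_3^3$ is a nonzero integer), and combined with $|\mu|\le H+c^2\le H+\bigl(\tfrac{H}{2}+\tfrac{H^2}{8}\bigr)^2$ this gives only $|x|\le 8H+2H^2+H^3+\tfrac{H^4}{8}$, which exceeds $26H^3$ once $H$ is moderately large (around $H>120$). Your parenthetical remark that ``the numerator $c^2$ and the denominator $a_3c$ grow together'' is exactly the needed idea, but it must be executed: for instance, split on whether $|c|\le 2H$ or $|c|>2H$. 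In the first case $|\mu|\le H+4H^2$ and $|\lambda|\ge\tfrac18$ give $|x|\le 40H^2$; in the second case necessarily $a_3\ne 0$, so $|\lambda|\ge|c|-H>\tfrac{|c|}{2}$ and $|x|\le 2|c|+1=O(H^2)$. With that case analysis supplied, the degenerate branch is actually far below $26H^3$ and the proof closes; without it, the claimed bound is not established for large $H$.
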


\section{Proof of \thmref{NEW1} }

\begin{proof} We consider the set $$\mathcal{P}=\{n \in \mathbb{Z}: m_n=n^4+5n^3+15n^2+25n+25 \mbox{ is a cube-free integer} \}.$$  For $n \in \mathcal{P}$, we have
 \begin{equation}\label{e1}
   m_n=5^bAB^2.
 \end{equation}
 Here $b=0$ if $n$ is not divisible by $5$ and $b=2$ otherwise, and $A,B$ are square-free natural numbers which are relatively prime and $5 \nmid AB$.  Using Lemma \ref{lem1} and (\ref{lc}) we have 
 \begin{equation}
     f(K_n)=5^bAB \mbox{ and } d(K_n)=(5^bAB)^4.
 \end{equation}
 Since $K_n / \mathbb{Q}$ is Galois and of degree $5$, we see that for any prime $p$ the ramification index $e_p$ of $p$ in $K_n$ is given by
 \begin{align*}e_p=
 \begin{cases}
5 \mbox{ if } p\mid5AB,\\
1 \mbox{ otherwise}.
 \end{cases}
 \end{align*}
Thus, we obtain 
 \begin{equation}
     \prod_pe_p=5^{\omega(d(K_n))}=5^{\omega(m_n)},
 \end{equation}
Now from Corollary \ref{cor2} we get 
\begin{equation}\label{pol}
    |Po(K_n)|=5^{\omega(m_n)-1}.
\end{equation}

Thus, for $n \in \mathcal{P}$, the Lehmer quintic field $K_n$ is a P\'olya field if and only if $m_n$ is a prime or square of a prime. We claim that $m_n$ is a square of a prime if and only if $m_n=25$. This claim proves (1). To prove the claim, consider the curve 
\begin{equation}\label{curve}
 Y^2=f(X)=X^4+5X^3+15X^2+25X+25.   
\end{equation}
From \thmref{mass}, we see that any integral solution $(x,y)$ of (\ref{curve}) satisfies 
$$|x|\leq 26\times25^3=406250.$$
Using a SAGE program we find that for $x\in[-406250,406250]$ the only integral solution to the curve $Y^2=f(X)$ is $(x,y)=(0,5)$. In other words $m_n=n^4+5n^3+15n^2+25n+25$ is not a square for any non-zero integer $n$ unless $m_n=25$.  This establishes the claim.


We have $Gal(K_n/\mathbb{Q})\simeq \mathbb{Z}/5\mathbb{Z}$. Let $\sigma$ be a generator of $Gal(K_n/\mathbb{Q})$ and $[\mathfrak{I}]\neq [1]$ be an ambiguous ideal class in $K_n$. Then we have,
\begin{align*}
    [\mathfrak{I}]^5&=[\mathfrak{I}][\mathfrak{I}][\mathfrak{I}][\mathfrak{I}][\mathfrak{I}]\\    
    &=[\mathfrak{I}][\mathfrak{I}]^\sigma[\mathfrak{I}]^{\sigma^2}[\mathfrak{I}]^{\sigma^3}[\mathfrak{I}]^{\sigma^4}\\
    &=[N(\mathfrak{I})]\\
    &=[1]
\end{align*}
where $N(\mathfrak{I})\in\mathbb{Q}$ denotes the norm of the ideal $\mathfrak{I}$. We conclude that the order of any non-trivial ambiguous ideal class in the class group of $K_n$ is $5$. From the structure theorem for the abelian groups we obtain $$Po(K_n)\simeq\left(\frac{\mathbb{Z}}{5\mathbb{Z}}\right)^{\omega({m_n})-1}.$$
This finishes the proof of $(2)$ of \thmref{NEW1}.

From (\ref{e1}), we see that $m_n$ can not be a prime whenever $5|n$. Thus $K_n$ is a non-P\'olya field whenever $n \neq 0, 5|n$ and $n \in \mathcal{P}$. Next we show that there are infinitely many such $n'$s. To do this we show that there are infinitely many $k$ such that $m_{5k}$ is cube-free. Put
\begin{equation}
\begin{aligned}
      m_{5k}:&=(5k)^4+5(5k)^3+15(5k)^2+25(5k)+25\\  
          &=25(25k^4+25k^3+15k^2+5k+1)\\
          &=25g(k).
\end{aligned}
\end{equation}
If $h(k)=ak+b$ is a linear polynomial such that $h(k)^3\mid g(k)$, then for $t=-b/a\in\mathbb{Q}$, we have 
\begin{align}
    g(t)&=0\implies 25t^4+25t^3+15t^2+5t+1=0,\\
    g'(t)&=0\implies 100t^3+75t^2+30t+5=0,\\
    g''(t)&=0\implies 300t^2+150t+30=0.
\end{align}
This contradicts the fact that $t\in\mathbb{Q}$. Thus, from Theorem \ref{ero}, it follows that $g(k)$ is cube-free for infinitely many $k$. Since $5\nmid g(k)$ for all $k$, it follows that $m_{5k}$ is cube-free for infinitely many integers $k$. This proves that $\mathcal{P}$ is an infinite set and completes the proof of the theorem.
\end{proof}

\begin{rmk}
From the proof of Theorem \ref{NEW1} it follows that for any $n \neq 0$, the Lehmer quintic field $K_{5n}$ is non-P\'olya whenever $m_{5n}$ is cube-free. However, there are non-P\'olya fields $K_{5n}$ with $m_{5n}$ not being cube-free (see the entry for $n=-53$ in the Table 1).\\

Conjecturally, there are infinitely many $n \in \mathbb{Z}$ such that $m_n$ is prime and thus the family $K_n$ should have infinitely many P\'olya fields. Under the assumption that $m_n$ is cube-free, Theorem \ref{NEW1} asserts that there are infinitely many P\'olya fields in the family $K_n$ only if there are infinitely many primes of the form $m_n$.
\end{rmk}

\begin{proof}
(Proof of \corref{cor1}) From the above remark, it is enough to find integers $n$ such that $m_{5n}$ is cube-free and $\omega(m_{5n})$ goes to infinity as $n$ goes to infinity. Let $g(k)=(25k^4+25k^3+15k^2+5k+1)$, then from Theorem \ref{Hel} for a positive proportion of primes $p$ we have $g(p)$ is cube-free. Consequently $m_{5p}$ is cube-free for a positive proportion of prime numbers $p$. Now onwards, we only consider those primes $p$ such that $m_{5p}$ is cube-free.  There is a positive constant $c$ such that for any large real number $X$ there are at least $c \frac{X}{\log X}$ many primes.  Now from Theorem \ref{ham}, it follows that $\omega(m_{5p})$ goes to infinity as $p$ goes to infinity.

%
\end{proof}

\section{P\'olya Numbers and monogenity of Lehmer Quintic Fields}

Recall the classical embedding problem: Is every number field $K$ contained in a field $L$ with class number one? In 1964, Golod and Shafarevich \cite{ESG} gave a negative answer to this question. The corresponding embedding problem for P\'olya fields was confirmed affirmatively by Leriche \cite{AL3}.  Leriche proved that every number field is contained in a P\'olya field, namely its Hilbert class field.

A minimal P\'olya field over $K$ is a field extension $L$ of $K$ which is a P\'olya field and such that no intermediate field $K\subseteq M\subsetneq L$ is a P\'olya field. 
\begin{defn} \cite{AL3}
The P\'olya number of $K$ is given by the integer 
$$po_K=\text{min}\{[L : K]\mid K\subseteq L,\: L\text{ P\'olya field}\}.$$
\end{defn}

The genus field (resp. genus field in the narrow sense) of $K$ is the maximal abelian extension $\Gamma_K$(resp. $\Gamma_K'$) of $K$ which is a compositum of $K$ with an absolute abelian number field and is unramified over $K$ at all places (resp. all finite places) of $K$. The genus number of $K$ is defined to be the degree $g_K=[\Gamma_K : K]$. If $K$ is abelian then Leriche showed that the genus field $\Gamma_K$ is P\'olya and hence
\begin{equation}
po_K\leq g_K.
\end{equation}

On the other hand, Zantema proved that both the cyclotomic and real cyclotomic fields are P\'olya fields \cite{ZAN}. Thus for abelian number fields $K$, if $f$ is the conductor of $K$ and $\phi(f)$ is the value of Euler totient function then we have
\begin{equation}
po_K\leq\frac{\phi(f)}{[K : \mathbb{Q}]}\quad\text{  and   }\quad po_K\leq\frac{\phi(f)}{2[K : \mathbb{Q}]}\text{ if $K$ is real}.
\end{equation}

For the general case, when $K$ is a Galois number field (not necessarily abelian) with class number $h_K$ then we have 
\begin{equation}
po_K\leq h_K.
\end{equation}

To prove Theorem \thmref{NEW2} we need the following result due to Ishida \cite{MI} on the genus number of a cyclic number field of prime degree.
\begin{thm}\label{polno}\cite{MI}
Let $K$ be a cyclic number field of degree $q$, where $q$ is an odd prime. If $t$ is the number of primes $p$ such that $p$ is totally ramified in $K$. Then the genus number $g_K$ of $K$ is given as follows.
$$g_K=\begin{cases}
q^t, \text{ if } q \text{ is totally ramified in }$K$,\\
q^{t-1}, \text{ otherwise. }
\end{cases}$$
\end{thm}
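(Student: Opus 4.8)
The plan is to prove the formula by classical genus theory for abelian extensions of $\mathbb{Q}$, realizing the genus field as an explicit compositum of cyclic fields of prime conductor. Since $q$ is an odd prime and $Gal(K/\mathbb{Q})\cong\mathbb{Z}/q\mathbb{Z}$, every nontrivial Dirichlet character cutting out $K$ has odd order $q$ and is therefore even; hence $K$ is totally real, and any extension of $K$ that is abelian over $\mathbb{Q}$ and remains totally real is automatically unramified at the infinite place. Thus the requirement ``unramified over $K$ at all places'' reduces to being unramified at the finite primes only.

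First I would record the local data. In a cyclic field of prime degree $q$ a ramified prime is totally ramified, since its ramification index divides $q$; let $p_1,\dots,p_r$ be the ramified primes, so $e_{p_i}=q$. For a ramified prime $p_i\neq q$ the ramification is tame, forcing $q\mid p_i-1$ (the mechanism behind \lemref{lem1}), and I let $L_i$ be the unique cyclic field of degree $q$ inside $\mathbb{Q}(\zeta_{p_i})$, which is ramified only at $p_i$. If $q$ itself ramifies I take the corresponding $L_i$ to be the degree-$q$ subfield of $\mathbb{Q}(\zeta_{q^2})$, of conductor $q^2$, which is totally real and ramified only at $q$.

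I would then identify the genus field as the compositum $\Gamma_K=K\cdot L_1\cdots L_r$. Factoring a generator $\chi$ of the character group of $K$ into its $p_i$-primary components $\chi=\prod_i\chi_{p_i}$, each $\chi_{p_i}$ has order $q$ and cuts out precisely $L_i$, so $K\subseteq L_1\cdots L_r$; since the $L_i$ have pairwise coprime conductors they are linearly disjoint over $\mathbb{Q}$, giving $Gal(L_1\cdots L_r/\mathbb{Q})\cong(\mathbb{Z}/q\mathbb{Z})^r$. That the compositum is unramified over $K$ follows by comparing ramification in the tower $\mathbb{Q}\subseteq K\subseteq L_1\cdots L_r$: the prime $p_i$ ramifies only in the factors $K$ and $L_i$, which share the same inertia at $p_i$, so its ramification index in the compositum is $q=e_{p_i}(K)$ and the relative index over $K$ is trivial; all other finite primes, and the infinite place, are unramified. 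The delicate half is maximality. An extension of $K$ that is abelian over $\mathbb{Q}$ and unramified over $K$ corresponds to a group of even Dirichlet characters ramified only within $\{p_1,\dots,p_r\}$ whose $p_i$-primary image has order exactly $e_{p_i}(K)=q$; but the even order-$q$ characters of $p_i$-power conductor form a single cyclic group of order $q$ — the $q$-part of $(\mathbb{Z}/p_i\mathbb{Z})^{\times}$ when $p_i\neq q$, and of $(\mathbb{Z}/q^2\mathbb{Z})^{\times}$ when $p_i=q$ — so this character group is forced to lie inside $\prod_i\widehat{L_i}$, pinning $\Gamma_K$ down exactly.

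Counting degrees then gives $g_K=[\Gamma_K:K]=q^{r}/q=q^{r-1}$, where $r$ is the number of ramified primes. Isolating the prime $q$ produces the two stated cases: if $q$ is totally ramified it is one of the $p_i$, so writing $t$ for the number of totally ramified primes different from $q$ we have $r=t+1$ and $g_K=q^{t}$, whereas if $q$ is unramified then $r=t$ and $g_K=q^{t-1}$. I expect the maximality step to be the main obstacle, since that is exactly where genus theory carries the weight, and where one must verify that the wild prime $q$ contributes only the single field $L_i$ of conductor $q^2$ and nothing further. As an independent check on the count I would invoke Chevalley's ambiguous class number formula, which for cyclic $K/\mathbb{Q}$ gives $|Cl(K)^{Gal(K/\mathbb{Q})}|=\tfrac{1}{q}\prod_p e_p=q^{r-1}$ once one notes that $-1$ is a norm from the odd-degree totally real field $K$; this equals $[\Gamma_K:K]$ because the genus group is $Cl(K)/Cl(K)^{1-\sigma}$ for a generator $\sigma$ of $Gal(K/\mathbb{Q})$.
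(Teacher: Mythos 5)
The paper does not prove this statement at all: it is quoted as a known result of Ishida \cite{MI}, so there is no in-paper argument to measure yours against. Judged on its own, your genus-theoretic argument is the standard one and is essentially sound: you realize $\Gamma_K$ as the compositum of $K$ with the degree-$q$ fields $L_i$ of conductor $p_i$ (resp.\ $q^2$ at the wild prime), verify unramifiedness over $K$ by comparing inertia, and get maximality from the fact that the characters of $p_i$-power conductor and order dividing $q$ form a single cyclic group of order $q$ (the evenness discussion is actually redundant, since characters of odd order are automatically even). The Chevalley cross-check is apt. What your argument actually establishes is $g_K=q^{\,r-1}$, where $r$ is the total number of (totally) ramified primes.

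That conclusion, however, does not match the statement as displayed. The theorem defines $t$ as the number of \emph{all} totally ramified primes, with no exclusion of $q$; under that literal reading the first case asserts $g_K=q^{\,r}$ when $q$ ramifies, which your own computation --- and the consistency check $g_K=|Cl(K)^{Gal(K/\mathbb{Q})}|=q^{\,r-1}$ --- contradicts. Concretely, for $K=\mathbb{Q}(\zeta_9)^{+}$ one has $q=3$ totally ramified, $t=1$, and $h_K=1$, so $g_K=1$ rather than $3$. You silently repaired this by redeclaring $t$, in the last step, to be the number of totally ramified primes \emph{different from} $q$; that is indeed the correct reading of Ishida's theorem, but you should state explicitly that you are proving a corrected version of the displayed statement rather than the statement itself. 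The distinction is not cosmetic in context: the proof of \thmref{NEW2} applies the displayed version with $t=\omega(m_n)$, which counts the ramified prime $5$, to get $g_{K_n}=5^{\omega(m_n)}$; the corrected formula gives $g_{K_n}=5^{\omega(m_n)-1}=|Po(K_n)|$, so the bound of \thmref{NEW2} survives and in fact sharpens to $po_{K_n}\leq|Po(K_n)|$.
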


\begin{proof} (Proof of \thmref{NEW2})
We have already seen in the proof of Theorem \ref{NEW1} that, the number of primes $p$ such that $p$ is totally ramified in $K_n$ is $\omega(m_n)$. Now applying Theorem \ref{polno} to the family of number fields $K_n$ we get,
\begin{align}\label{polyano}
   g_{K_n}=5^{\omega(m_n)}
   \implies po_{K_n}\leq 5^{\omega(m_n)}.
   \end{align}
Using (\ref{pol}) in (\ref{polyano}) we obtain
\begin{equation}
    po_{K_n}\leq 5|Po(K_n)|.
\end{equation}
\end{proof} 
\begin{rmk}
Generally both $po_{K}$ and $Po(K)$ are mutually independent objects, but here in the case of non-P\'olya Lehmer quintic fields we have an unexpected relation.
\end{rmk}

To prove Theorem \thmref{NEW3} we need the following result of Gras \cite{MNG} on the monogenicity of cyclic number fields of prime degree.
\begin{prop}\label{gras}
\cite{MNG} If $K$ is a cyclic number field of prime degree $p\geq5$ then $K$ is monogenic only if $2p+1$ is prime and it is the maximal real subfield of cyclotomic field $\mathbb{Q}(\zeta_{2p+1})$.
\end{prop}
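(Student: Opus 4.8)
Since $K$ is cyclic of odd prime degree $p$, complex conjugation would be an automorphism of order dividing $2$ inside a group of odd order, hence trivial, so $K$ is totally real; this is already consistent with the target field being a maximal real subfield. Fix a generator $\sigma$ of $G=\mathrm{Gal}(K/\mathbb{Q})$ and suppose $\mathcal{O}_K=\mathbb{Z}[\theta]$. By the conductor--discriminant formula $d_K=f^{p-1}$, where $f$ is the conductor, every ramified prime is totally ramified (its ramification index divides the prime $p$ and exceeds $1$), and every ramified prime $\ell\neq p$ satisfies $\ell\equiv 1\pmod p$ since $\mathbb{F}_\ell^\times$ must contain an element of order $p$. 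Writing $\theta_i=\sigma^i(\theta)$, the plan is to extract from the monogenicity hypothesis enough arithmetic to determine $f$ exactly.

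I would use two structural inputs. First, $\sigma(\theta)\in\mathcal{O}_K=\mathbb{Z}[\theta]$ gives a polynomial $g\in\mathbb{Z}[X]$ of degree $<p$ with $\sigma(\theta)=g(\theta)$; then $g$ realizes the $p$-cycle $\theta_i\mapsto\theta_{i+1}$ on the roots of the minimal polynomial $m_\theta$, with $g^{\circ p}\equiv\mathrm{id}\pmod{m_\theta}$, and modulo each totally ramified prime $\ell$ one has $m_\theta\equiv(X-c)^p$. Second, monogenicity gives $\mathfrak{d}_K=(m_\theta'(\theta))$ with $m_\theta'(\theta)=\prod_{i=1}^{p-1}(\theta-\theta_i)$, so taking norms yields $\prod_{i=1}^{p-1}N(\theta-\theta_i)=\pm d_K=\pm f^{p-1}$. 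I would then study the individual factors through $\beta=\theta_1-\theta_0$ and its conjugates $\sigma^i(\beta)=\theta_{i+1}-\theta_i$, which are totally real algebraic integers summing to $0$, together with the symmetry $N(\alpha_k)=-N(\alpha_{p-k})$ for the normed differences $\alpha_k=\theta_k-\theta_0$ (which follows from $\sigma^{-k}(\alpha_k)=-\alpha_{p-k}$ and $p$ odd).

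The heart of the argument, and the step I expect to be the main obstacle, is converting these multiplicative and local constraints into the exact value $f=2p+1$. Concretely, I would analyze the factorization of each $N(\theta_i-\theta_j)$ prime by prime, noting that a tame ramified prime $\ell\neq p$ contributes the full power $\ell^{p-1}$ to $d_K$ through a single difference-orbit, handling the wild prime $\ell=p$ separately, and combining this with the permutation data carried by $g$. The congruence $\ell\equiv 1\pmod p$ makes $2p+1$ the smallest admissible prime conductor, since $p+1$ is even for odd $p$; the delicate point is therefore the reverse direction, namely showing that monogenicity is incompatible both with more than one ramified prime and with $f>2p+1$. This is exactly where Gras's Diophantine analysis enters, and I would expect to reproduce a bounding argument forcing a single ramified prime $\ell=2p+1$. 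The hypothesis $p\geq 5$ is essential precisely here: it is what makes the resulting system overdetermined, whereas for $p=3$ the constraints leave room for many monogenic cyclic cubic fields of larger conductor.

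Once it is established that $f=2p+1$ is prime, the conclusion is immediate. The group $\mathrm{Gal}(\mathbb{Q}(\zeta_{2p+1})/\mathbb{Q})$ is cyclic of order $2p$, so it has a unique subgroup of index $p$ and hence $\mathbb{Q}(\zeta_{2p+1})$ has a unique subfield of degree $p$; that subfield has degree $(2p+1-1)/2=p$ and is the maximal real subfield $\mathbb{Q}(\zeta_{2p+1})^+$. Since $K$ is a cyclic field of degree $p$ with conductor $2p+1$, it must coincide with this unique field, giving $K=\mathbb{Q}(\zeta_{2p+1})^+$ and completing the proof.
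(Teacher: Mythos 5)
There is a genuine gap, and it sits exactly where you predicted it would. First, a point of comparison: the paper does not prove this proposition at all --- it is quoted verbatim from M.-N. Gras's 1986 paper \cite{MNG} and used as a black box in the proof of Theorem 1.4, so there is no internal proof to measure your attempt against; the only question is whether your proposal constitutes a proof on its own. It does not. Your preliminary reductions are all correct: $K$ is totally real since a group of odd order has no element of order $2$; the conductor--discriminant formula gives $d_K=f^{p-1}$; every ramified prime is totally ramified and every tame ramified prime satisfies $\ell\equiv1\pmod p$; monogenicity gives $\mathfrak{d}_K=(m_\theta'(\theta))$ and the norm identity $\prod_{i=1}^{p-1}N(\theta-\theta_i)=\pm f^{p-1}$; and the sign symmetry $N(\alpha_k)=-N(\alpha_{p-k})$ checks out. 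The closing step is also correct: once $f=2p+1$ is prime, $\mathrm{Gal}(\mathbb{Q}(\zeta_{2p+1})/\mathbb{Q})$ is cyclic of order $2p$ with a unique subgroup of index $p$, so $K$ must be the unique degree-$p$ subfield, which is $\mathbb{Q}(\zeta_{2p+1})^+$.

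But the entire content of the theorem is the middle implication --- that $\mathcal{O}_K=\mathbb{Z}[\theta]$ forces no wild ramification at $p$, exactly one ramified prime, and conductor exactly $2p+1$ --- and at that point you write that ``this is exactly where Gras's Diophantine analysis enters'' and that you ``would expect to reproduce a bounding argument.'' Deferring the central step to the cited authority is not a proof; everything you actually establish is standard bookkeeping that holds for \emph{every} monogenic cyclic field of prime degree and carries no obstruction to, say, a single ramified prime $\ell\equiv1\pmod p$ with $\ell>2p+1$: the norm equation $\prod_i N(\theta-\theta_i)=\pm\ell^{p-1}$ is perfectly consistent for general $\ell$, so prime-by-prime factorization of the conjugate differences cannot by itself close the argument. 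Gras's actual mechanism is different in kind from what you sketch: she exploits the fact that monogenicity makes the quotients of conjugate differences such as $(\sigma^{i+1}\theta-\sigma^i\theta)/(\sigma\theta-\theta)$ into \emph{units}, and the multiplicative relations these units satisfy, measured against the cyclotomic/circular units, yield a contradiction once $p\geq5$ --- this is also precisely where $p\geq5$ enters quantitatively, consistent with the existence of infinitely many monogenic cyclic cubic fields. Without that unit-theoretic (or an equivalent) argument, your proposal proves only the easy envelope of the theorem, not the theorem.
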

Lastly, we recall a result of Zylinski \cite{ZAL}.
\begin{prop}
If $K$ is a number field of degree $n$, then $I(K)$ has only prime divisors $p$ satisfying $p<n$.
\end{prop}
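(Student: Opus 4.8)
The plan is to identify the prime divisors of $I(K)$ with the \emph{common index divisors} of $K$ and then rule out large ones by a counting argument. Writing $n=[K:\mathbb{Q}]$, observe that by the definition $I(K)=\gcd\{I(\theta)\}$ a prime $p$ divides $I(K)$ if and only if $p\mid I(\theta)$ for every $\theta\in\mathcal{O}_K$ generating $K$; that is, $p$ is a common index divisor. So it suffices to prove that every common index divisor satisfies $p<n$, equivalently that no prime $p\ge n$ is a common index divisor.

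The main tool I would invoke is the classical criterion of Hensel describing common index divisors in terms of the splitting of $p$. Factor $p\mathcal{O}_K=\prod_i\mathfrak{p}_i^{e_i}$ with residue degrees $f_i=f(\mathfrak{p}_i\mid p)$, and for each $f\ge1$ let $g_f=\#\{i:f_i=f\}$ be the number of primes above $p$ of residue degree $f$. Let $N_p(f)$ denote the number of monic irreducible polynomials of degree $f$ in $\mathbb{F}_p[x]$. The criterion asserts that $p$ is a common index divisor if and only if $g_f>N_p(f)$ for some $f$. The mechanism is that for a generator $\theta$ one has $p\nmid I(\theta)$ precisely when the reduced minimal polynomial of $\theta$ factors over $\mathbb{F}_p$ as a product of powers of distinct irreducibles matching the type $(e_i,f_i)$; such a $\theta$ can be produced (by approximation at the $\mathfrak{p}_i$) exactly when, for every $f$, enough distinct monic irreducibles of degree $f$ are available, i.e.\ $g_f\le N_p(f)$. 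I would cite this rather than reprove it.

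Granting the criterion, fix a prime $p\ge n$ and check that $g_f\le N_p(f)$ for all $f$. The basic inequality is $f\,g_f\le\sum_i e_if_i=n$ (each $e_i\ge1$), so $g_f\le n/f$, and in particular every occurring residue degree obeys $f\le n\le p$. For $f=1$ this gives $g_1\le n\le p=N_p(1)$. For $f\ge2$ I would use Gauss's formula $f\,N_p(f)=\sum_{d\mid f}\mu(f/d)p^d$ together with the crude lower bound $f\,N_p(f)\ge p^f-\sum_{d\mid f,\,d<f}p^d\ge p^f-2p^{f-1}=p^{f-1}(p-2)$; when $p\ge3$ this is at least $p^{f-1}\ge p$, whence $N_p(f)\ge p/f\ge n/f\ge g_f$, and the remaining case $p=2$ forces $n\le2$ and is immediate. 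Thus no prime $p\ge n$ can satisfy $g_f>N_p(f)$.

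Combining, every prime $p\ge n$ fails to be a common index divisor, so $p\nmid I(K)$; hence all prime divisors of $I(K)$ are strictly less than $n$. The only genuinely delicate point is the Hensel criterion, and specifically its sufficiency direction, namely constructing a generator whose index is prime to $p$ from an abstract assignment of distinct irreducibles to the primes $\mathfrak{p}_i$; the rest of the argument is the elementary estimate above. Since that criterion is standard, I would quote it and present the counting as the substance of the proof.
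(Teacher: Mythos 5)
Your proposal is correct, but there is nothing in the paper to compare it against: the paper states this proposition as a quoted classical result of Zylinski \cite{ZAL} and uses it as a black box (it is invoked once, in the proof of Theorem \ref{NEW3}, to conclude $I(\theta_n)=1$ from the fact that $2,3\nmid I(\theta_n)$), so no proof is given there. What you have written is the standard modern proof of Zylinski's theorem, and it holds up. The reduction of the statement to ``no prime $p\geq n$ is a common index divisor'' is immediate from $I(K)=\gcd\{I(\theta)\}$. Hensel's criterion --- $p$ is a common index divisor if and only if $g_f>N_p(f)$ for some residue degree $f$ --- is a legitimately citable classical theorem, and you correctly identify its sufficiency direction (constructing a generator with $p\nmid I(\theta)$ from an injective assignment of distinct irreducibles to the primes above $p$) as the only nonelementary ingredient. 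Your counting is sound: $fg_f\leq\sum_i e_if_i=n$ gives $g_1\leq n\leq p=N_p(1)$; for $f\geq2$ and $p\geq3$, the bound $fN_p(f)\geq p^f-\sum_{d\mid f,\,d<f}p^d\geq p^f-2p^{f-1}=p^{f-1}(p-2)\geq p^{f-1}\geq p$ yields $N_p(f)\geq p/f\geq n/f\geq g_f$ (the estimate $\sum_{d=1}^{f-1}p^d\leq 2p^{f-1}$ indeed holds for all $p\geq2$); and the leftover case $p=2\geq n$ is settled since quadratic fields are monogenic. Hence every prime $p\geq n$ satisfies $g_f\leq N_p(f)$ for all $f$, so it cannot divide $I(K)$, which is exactly the claim.
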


\begin{proof} (Proof of \thmref{NEW3})
Let $K_n$ be the family of non-P\'olya Lehmer quintics fields. We know that $Gal(K_n/\mathbb{Q})\simeq\mathbb{Z}/5\mathbb{Z}$. From Theorem \ref{NEW1} we have
\begin{equation}
    |Po(K_n)|\geq5.
\end{equation}
Since real cyclotomic fields are P\'olya fields, $K_n$ never occurs as the maximal real subfield of a cyclotomic field. From Proposition \ref{gras} it follows that $K_n$ is not monogenic.

Next, we aim to show $I(K_n)=1$, for all non-zero $n$ that are divisible by $5$.  We recall the relation
\begin{equation}
d(\theta_n)=[I(\theta_n)]^2d(K_n)
\end{equation}
As mentioned earlier,
$$d(\theta_n)=(n^3+5n^2+10n+7)^2(n^4+5n^3+15n^2+25n+25)^4.$$
From Lemma \ref{lem1}, we see that $(n^4+5n^3+15n^2+25n+25)$ is not divisible by $2$ or $3$.  It is easily seen that $(n^3+5n^2+10n+7)$ is also not divisible by $2$ or $3$. Consequently, we conclude that $I(\theta_n)$ is not divisible by $2$ or $3$. Now, from the result of Zylinski, it follows that $I(\theta_n)=1$.


\end{proof}

\section{Computation}
In this section we present our computations showing that there are many non-P\'olya fields in the family $K_{5n}$. For most of $n \in \{-60,-59, \ldots, 59,60\}$ we see that $m_{5n}$ is cube-free with only exception occurring at $n=-53$.  For all $n$ in this range the P\'olya group is non-trivial. In fact, for $n=-53$, $m_{5n}$ is not cube-free but the field $K_{5n}$ is non-P\'olya. In the table below $C_{m_{5n}}$ denotes the cube part of $m_{5n}$.These computations are performed by us using SAGE.

\begin{table}[H]
\caption{Family of non-P\'olya fields}
\parbox[t]{0.49\linewidth}{
\centering
\begin{longtable}[t]{c c c c}\hline
$n$ & $m_{5n}$& \Gape[.15cm][.15cm] {$C_{m_{5n}}$}  & $\#Po(K_{5n})$ \\
\hline
-60 & 7966342525 & 1 & $5^ 2 $ \\
-59 & 7446286775 & 1 & $5^ 2 $ \\
-58 & 6952119275 & 1 & $5^ 2 $ \\
-57 & 6482966275 & 1 & $5^ 2 $ \\
-56 & 6037969025 & 1 & $5^ 3 $ \\
-55 & 5616283775 & 1 & $5^ 2 $ \\
-54 & 5217081775 & 1 & $5^ 3 $ \\
\textbf{-53} & \textbf{4839549275} & \textbf{1331} & \textbf{5}\textsuperscript{\textbf{2}} \\
-52 & 4482887525 & 1 & $5^ 3 $ \\
-51 & 4146312775 & 1 & $5^ 4 $ \\
-50 & 3829056275 & 1 & $5^ 3 $ \\
-49 & 3530364275 & 1 & $5^ 2 $ \\
-48 & 3249498025 & 1 & $5^ 1 $ \\
-47 & 2985733775 & 1 & $5^ 3 $ \\
-46 & 2738362775 & 1 & $5^ 2 $ \\
-45 & 2506691275 & 1 & $5^ 2 $ \\
-44 & 2290040525 & 1 & $5^ 2 $ \\
-43 & 2087746775 & 1 & $5^ 2 $ \\
-42 & 1899161275 & 1 & $5^ 2 $ \\
-41 & 1723650275 & 1 & $5^ 2 $ \\
-40 & 1560595025 & 1 & $5^ 4 $ \\
-39 & 1409391775 & 1 & $5^ 2 $ \\
-38 & 1269451775 & 1 & $5^ 2 $ \\
-37 & 1140201275 & 1 & $5^ 1 $ \\
-36 & 1021081525 & 1 & $5^ 2 $ \\
-35 & 911548775 & 1 & $5^ 2 $ \\
-34 & 811074275 & 1 & $5^ 2 $ \\
-33 & 719144275 & 1 & $5^ 1 $ \\
-32 & 635260025 & 1 & $5^ 1 $ \\
-31 & 558937775 & 1 & $5^ 3 $ \\
-30 & 489708775 & 1 & $5^ 2 $ \\
-29 & 427119275 & 1 & $5^ 3 $ \\
-28 & 370730525 & 1 & $5^ 2 $ \\
-27 & 320118775 & 1 & $5^ 2 $ \\
-26 & 274875275 & 1 & $5^ 2 $ \\
-25 & 234606275 & 1 & $5^ 1 $ \\
-24 & 198933025 & 1 & $5^ 3 $ \\
-23 & 167491775 & 1 & $5^ 3 $ \\
-22 & 139933775 & 1 & $5^ 1 $ \\
-21 & 115925275 & 1 & $5^ 3 $ \\
-20 & 95147525 & 1 & $5^ 3 $ \\

\hline
\end{longtable}
	
}
\hfill
\parbox[t]{0.50\linewidth}{
\centering
\begin{longtable}[t]{c c c c}
\hline
$n$ & $m_{5n}$& \Gape[.15cm][.15cm] {$C_{m_{5n}}$} & $\#Po(K_{5n})$ \\
\hline
1 & 1775 & 1 & $5^ 1 $ \\
2 & 16775 & 1 & $5^ 2 $ \\
3 & 71275 & 1 & $5^ 1 $ \\
4 & 206525 & 1 & $5^ 2 $ \\
5 & 478775 & 1 & $5^ 2 $ \\
6 & 959275 & 1 & $5^ 1 $ \\
7 & 1734275 & 1 & $5^ 1 $ \\
8 & 2905025 & 1 & $5^ 1 $ \\
9 & 4587775 & 1 & $5^ 1 $ \\
10 & 6913775 & 1 & $5^ 3 $ \\
11 & 10029275 & 1 & $5^ 2 $ \\
12 & 14095525 & 1 & $5^ 1 $ \\
13 & 19288775 & 1 & $5^ 2 $ \\
14 & 25800275 & 1 & $5^ 2 $ \\
15 & 33836275 & 1 & $5^ 3 $ \\
16 & 43618025 & 1 & $5^ 2 $ \\
17 & 55381775 & 1 & $5^ 3 $ \\
18 & 69378775 & 1 & $5^ 2 $ \\
19 & 85875275 & 1 & $5^ 1 $ \\
20 & 105152525 & 1 & $5^ 1 $ \\
21 & 127506775 & 1 & $5^ 3 $ \\
22 & 153249275 & 1 & $5^ 2 $ \\
23 & 182706275 & 1 & $5^ 1 $ \\
24 & 216219025 & 1 & $5^ 2 $ \\
25 & 254143775 & 1 & $5^ 3 $ \\
26 & 296851775 & 1 & $5^ 2 $ \\
27 & 344729275 & 1 & $5^ 3 $ \\
28 & 398177525 & 1 & $5^ 2 $ \\
29 & 457612775 & 1 & $5^ 1 $ \\
30 & 523466275 & 1 & $5^ 2 $ \\
31 & 596184275 & 1 & $5^ 2 $ \\
32 & 676228025 & 1 & $5^ 2 $ \\
33 & 764073775 & 1 & $5^ 1 $ \\
34 & 860212775 & 1 & $5^ 2 $ \\
35 & 965151275 & 1 & $5^ 3 $ \\
36 & 1079410525 & 1 & $5^ 2 $ \\
37 & 1203526775 & 1 & $5^ 2 $ \\
38 & 1338051275 & 1 & $5^ 3 $ \\
39 & 1483550275 & 1 & $5^ 2 $ \\
40 & 1640605025 & 1 & $5^ 2 $ \\
41 & 1809811775 & 1 & $5^ 2 $ \\
\hline

\end{longtable}
\qquad\qquad(To be continued)
}
	\end{table}

\begin{table}[H]
\parbox[t]{0.49\linewidth}{
\begin{longtable}[t]{c c c c}\hline
$n$ & $m_{5n}$& \Gape[.15cm][.15cm] {$C_{m_{5n}}$}  & $\#Po(K_{5n})$ \\
\hline

-19 & 77296775 & 1 & $5^ 2 $ \\
-18 & 62084275 & 1 & $5^ 3 $ \\
-17 & 49236275 & 1 & $5^ 2 $ \\
-16 & 38494025 & 1 & $5^ 2 $ \\
-15 & 29613775 & 1 & $5^ 1 $ \\
-14 & 22366775 & 1 & $5^ 2 $ \\
-13 & 16539275 & 1 & $5^ 2 $ \\
-12 & 11932525 & 1 & $5^ 2 $ \\
-11 & 8362775 & 1 & $5^ 1 $ \\
-10 & 5661275 & 1 & $5^ 1 $ \\
-9 & 3674275 & 1 & $5^ 3 $ \\
-8 & 2263025 & 1 & $5^ 2 $ \\
-7 & 1303775 & 1 & $5^ 2 $ \\
-6 & 687775 & 1 & $5^ 3 $ \\
-5 & 321275 & 1 & $5^ 2 $ \\
-4 & 125525 & 1 & $5^ 1 $ \\
-3 & 36775 & 1 & $5^ 1 $ \\
-2 & 6275 & 1 & $5^ 1 $ \\
-1 & 275 & 1 & $5^ 1 $ \\
\hline
\end{longtable}
	
}
\hfill
\parbox[t]{0.50\linewidth}{
\centering
\begin{longtable}[t]{c c c c}
\hline
$n$ & $m_{5n}$& \Gape[.15cm][.15cm] {$C_{m_{5n}}$} & $\#Po(K_{5n})$ \\
\hline

42 & 1991781775 & 1 & $5^ 3 $ \\
43 & 2187141275 & 1 & $5^ 4 $ \\
44 & 2396531525 & 1 & $5^ 1 $ \\
45 & 2620608775 & 1 & $5^ 2 $ \\
46 & 2860044275 & 1 & $5^ 2 $ \\
47 & 3115524275 & 1 & $5^ 3 $ \\
48 & 3387750025 & 1 & $5^ 3 $ \\
49 & 3677437775 & 1 & $5^ 4 $ \\
50 & 3985318775 & 1 & $5^ 1 $ \\
51 & 4312139275 & 1 & $5^ 1 $ \\
52 & 4658660525 & 1 & $5^ 2 $ \\
53 & 5025658775 & 1 & $5^ 2 $ \\
54 & 5413925275 & 1 & $5^ 2 $ \\
55 & 5824266275 & 1 & $5^ 3 $ \\
56 & 6257503025 & 1 & $5^ 3 $ \\
57 & 6714471775 & 1 & $5^ 3 $ \\
58 & 7196023775 & 1 & $5^ 2 $ \\
59 & 7703025275 & 1 & $5^ 3 $ \\
60 & 8236357525 & 1 & $5^ 2 $ \\
\hline
\end{longtable}
}
	\end{table}



\begin{thebibliography}
{100}

%



\bibitem{PJC}
    P. J. Cahen, J. L. Chabert, Integer-Valued Polynomials, {\em Math. Surveys Monogr.}, Amer. Math. Soc., Providence, 1997.


 
    
 \bibitem{JLC}    
    J. L. Chabert, Factorial groups and P\'olya groups in Galoisian extensions of $\mathbb{Q}$, {\em Commutative Ring Theory and Applications}, Lect. Notes Pure Appl. Math. {\bf 231}, Dekker, New York, (2003), 77--86.   

 \bibitem{JAI}    
    J. Chattopadhyay and A. Saikia, Totally real bi-quadratic fields with large P\'olya groups, {\em Res. Number Theory} {\bf 8}, 30 (2022). https://doi.org/10.1007/s40993-022-00327-8



 
 \bibitem{PER}    
    P. Erd\"os, Arithmetical properties of polynomials, {\em J. London Math. Soc.} {\bf 28} (1953), 416--425.
 
  \bibitem{ESG}    
    E. S. Golod, I. R. Shafarevich, On the class field tower, {\em Izv. Akad. Nauk} {\bf 28} (1964), 261--272.
 
 \bibitem{JEA}
    J. Gillibert and A. Levin, A Geometric Approach to Large Class Groups: A Survey, In: Chakraborty, K., Hoque, A., Pandey, P. (eds) Class Groups of Number Fields and Related Topics. Springer, Singapore. https://doi.org/10.1007/978-981-15-1514-9\_1
 
  \bibitem{MNG}    
    M. -N. Gras, Non monog\'en\'eit\'e de l'anneau des entires des extensions cycliques de $\mathbb{Q}$ de degr\'e premier $l\geq5$, {\em J. Number Theory} {\bf 23} (1986), 347--353.

 \bibitem{HAL}
    H. Halberstam, On the distribution of additive number-theoritic functions. III, {\em J. London Math. Soc.} {\bf 31} (1956), 14--27.

	



 \bibitem{BAH1}
    B. Heidaryan and A. Rajaei, Biquadratic P\'olya fields with only one quadratic P\'olya subfield, {\em J. Number Theory} {\bf 143} (2014), 279--285.
 
 \bibitem{BAH2}
    B. Heidaryan and A. Rajaei, Some non-P\'olya biquadratic fields with low ramificattion, {\em Rev. Mat. Iberoam} {\bf 33} (2017), 1037--1044.
    
\bibitem{HH1}
	H. Helfgott, Power-free values, large deviations and integer points on irrational curves, {\em J. Th\'eor.
	Nombres Bordeaux} {\bf 19}, (2007), 433--472.    

\bibitem{HH2}
	H. Helfgott, Power-free values repulsion between points, differing beliefs and the existence of error,
	{\em Anatomy of integers}, CRM Proceedings \& Lecture Notes {\bf 46}, (American Mathematical Society, Providence,
	RI, 2008) 81--88.
 
 
    
\bibitem{MI}    
    M. Ishida, The Genus Fields of Algebraic Number Fields, {\em Springer} (1976).

\bibitem{SJE}    
    S. Jeannin, Nombre de classes et unit\'es des corps de nombres cycliques quintiques d'E. Lehmer, {\em J. Th\'eor. Nombres Bordeaux} {\bf 8} (1996), 75--92.


\bibitem{LEH}    
    E. Lehmer, Connection between Gaussian periods and cyclic units, {\em Math. Comp.} {\bf 50} (1988), 535--541.


\bibitem{AL1}    
    A. Leriche, P\'olya field, P\'olya groups and P\'olya extensions: A question of capitulation, {\em J. Theor. Nombres Bordeaux} {\bf 23} (2011), 235--249.
 
\bibitem{AL2}    
    A. Leriche, Cubic, quartic and sextic P\'olya fields, {\em J. Number Theory} {\bf 133} (2013), 59--71.    
    
\bibitem{AL3}    
    A. Leriche, About the embedding of a number field in a P\'olya field, {\em J. Number Theory} {\bf 145}, (2014), 210--229.     


\bibitem{AMA}
    A. Maarefparvar, P\'olya group in some real biquadratic fields , {\em J. Number Theory}  {\bf 228}, (2021), 1--7.

\bibitem{MAS}
    D. W. Masser, Polynomial Bounds for Diophantine Equations, {\em Amer. Math. Monthly} {\bf 93}, (1986), 486--488.

\bibitem{TREUSS}
	T. Reuss, Power-free values of polynomials, {\em Bull. London Math. Soc.} {\bf 47}, (2015), 270--284.

    
\bibitem{RS}    
    R. Schoof, L. C. Washington, Quintic polynomials and real cyclotomic fields with large class numbers {\em Math. Comp.} {\bf 50} (1988), 543--556. 
 
\bibitem{CHA}    
    C. W. -W. Tougma, Some questions on biquadric P\'olya fields, {\em J. Number Theory} {\bf 229,} (2021), 386--398. 

\bibitem{IND}
    Z. Wolske, \enquote{Number Fields with Large Minimal Index}, thesis, University of Toronto, 2018.
 
\bibitem{ZAL}
    E. von Zylinski, Zur Theorie der außerwesentlichen Diskriminantenteiler algebraischer \"{K}orper, {\em Math. Ann.} {\bf 73}(2), (1913), 273--274.
    
\bibitem{ZAN} 
    H. Zantema, Integer valued polynomials over a number field, {\em Manuscripta Math.} {\bf 40}, (1982), 155--203.






\end{thebibliography}
\end{document}